\newcommand*{\wh}{\widehat}
\newcommand*{\wt}{\widetilde}
\newcommand*{\ol}{\overline}
\newcommand*{\eps}{\varepsilon}
\newcommand*{\N}{\mathbb{N}}
\newcommand*{\R}{\mathbb{R}}
\newcommand*{\IZ}{\mathbb{Z}}
\newcommand*{\bbN}{\mathbb N}
\newcommand*{\bbR}{\mathbb R}
\newcommand*{\bbZ}{\mathbb Z}
\newcommand*{\cF}{\mathcal{F}}
\newcommand*{\cG}{\mathcal{G}}
\newcommand*{\cY}{\mathcal{Y}}
\newcommand*{\loc}{\mathrm{loc}}
\newcommand*{\low}{\mathrm{low}}
\newcommand*{\high}{\mathrm{high}}
\newcommand*{\vertical}{\mathrm{vertical}}
\newcommand*{\sameorder}{\stackrel{o}{\sim}}
\newcommand*{\sgn}{\operatorname{sgn}}
\newcommand{\be}{\begin{eqnarray*}}
\newcommand{\ee}{\end{eqnarray*}}
\newcommand{\ben}{\begin{eqnarray}}
\newcommand{\een}{\end{eqnarray}}
\newcommand{\bi}{\begin{itemize}}
\newcommand{\ei}{\end{itemize}}
\newtheorem{theo}{Theorem}[section]
\newtheorem{lemma}[theo]{Lemma}
\newtheorem{propo}[theo]{Proposition}
\newtheorem{corollary}[theo]{Corollary}
\theoremstyle{definition}
\newtheorem{ex}[theo]{Example}
\newtheorem{remark}[theo]{Remark}
\newcounter{numpar}[section]
\title{Properties of the EMCEL scheme for approximating irregular diffusions}
\author{Stefan Ankirchner\thanks{%
Stefan Ankirchner, Institute of Mathematics, University of Jena, Ernst-Abbe-Platz 2, 07745 Jena, Germany. \emph{Email:} s.ankirchner@uni-jena.de, \emph{Phone:} +49 (0)3641 946275.}
\and Thomas Kruse\thanks{%
Thomas Kruse, Institute of Mathematics, University of Gie{\ss}en, Arndtstr.~2, 35392 Gießen, Germany.
\emph{Email:} thomas.kruse@math.uni-giessen.de, \emph{Phone:} +49 (0)641 9932102.}
\and Wolfgang L\"ohr\thanks{%
Wolfgang L\"ohr, Faculty of Mathematics, University of Duisburg-Essen, Thea-Leymann-Str.~9, 45127 Essen, Germany.
\emph{Email:} wolfgang.loehr@uni-due.de, \emph{Phone:} +49 (0)201 1834729.}
\and Mikhail Urusov\thanks{%
Mikhail Urusov, Faculty of Mathematics, University of Duisburg-Essen, Thea-Leymann-Str.~9, 45127 Essen, Germany.
\emph{Email:} mikhail.urusov@uni-due.de, \emph{Phone:} +49 (0)201 1837428.
}}
\begin{document}

\maketitle

\begin{abstract}
We prove several properties of the EMCEL scheme, which is capable of approximating one-dimensional continuous strong Markov processes in distribution on the path space (the scheme is briefly recalled).
Special cases include irregular stochastic differential equations and processes with sticky features.
In particular, we highlight differences from the Euler scheme in the case of stochastic differential equations and discuss a certain ``stabilizing'' behavior of the EMCEL scheme like ``smoothing and tempered growth behavior''.

\smallskip
\emph{Keywords:}
one-dimensional Markov process;
speed measure;
sticky point;
Markov chain approximation;
numerical scheme.

\smallskip
\emph{2010 MSC:} Primary: 60J25; 60J60. Secondary: 60J22; 60H35.
\end{abstract}

\section*{Introduction}
The aim of this paper is to prove several desirable properties
of the EMCEL approximation scheme
(the idea behind the scheme is briefly recalled below in the introduction,
and the scheme is formally described in Section~\ref{sec:emcel}),
which is capable of approximating all
one-dimensional continuous strong Markov processes
(abbreviated as \emph{general diffusions} in what follows).

The set of general diffusions includes strong and weak solutions
of one-dimensional stochastic differential equations (SDEs)
with possibly irregular coefficients whenever uniqueness in law holds for the SDE.
For the case of SDEs, there are many different approaches
for approximating their solutions, e.g., the Euler scheme.
There are, however, many general diffusions
that cannot be written as solutions to SDEs.
This is, in particular, true for general diffusions with sticky features,
where a sticky point is located in the interior of the state space.
A related interesting phenomenon is sticky reflection,
where the sticky point is located at the boundary of the state space.
Recent years have witnessed an increased interest
in general diffusions with sticky features,
see
\cite{Bass2014},
\cite{CanCaglar2019},
\cite{EberleZimmer2019},
\cite{ep2014},
\cite{FGV2016},
\cite{GV2017},
\cite{GV2018},
\cite{HajriCaglarArnaudon:17},
\cite{KSS2011},
\cite{Konarovskyi2017},
\cite{KvR2017}
and references therein.
All such (and other) general diffusions can be approximated via the EMCEL scheme.

Let us briefly illustrate the central idea behind the construction of the EMCEL scheme. To this end let $Y=(Y_t)_{t\in [0,\infty)}$ be a general diffusion in natural scale. For simplicity we assume throughout this paragraph that the state space of the general diffusion $Y$ is equal to the whole real line $\R$. Let $(\xi_k)_{k \in \bbN}$ be an iid sequence of random variables,
on a probability space with a measure $P$, satisfying $P(\xi_k = \pm 1) = \frac12$. Given an initial value $y_0\in \R$ and a discretization parameter $h\in (0,\infty)$,
we recursively define a Markov chain
on the time grid $\{kh:k\in\bbN_0\}$
by the formula
\begin{equation}\label{eq:def_X1}
\wh X^h_0 = y_0
\quad\text{and}\quad
\wh X^h_{(k+1)h} = \wh X^h_{kh}+\wh a_h(\wh X^h_{kh}) \xi_{k+1}, \quad \text{for } k \in \bbN_0.
\end{equation}
Here the function $\wh a_h\colon \R \to [0,\infty)$,
termed \emph{scale factor} in the sequel
(as it is used to scale the incoming random variables $\xi_{k+1}$ in~\eqref{eq:def_X1}),
is chosen in such a way that 
the expected time it takes $Y$ started in $y\in \R$ to leave the interval $(y-\wh a_h(y),y+\wh a_h(y))$ is equal to $h$, i.e., $\wh a_h$ satisfies for all $y\in \R$ that $E_y[H_{y-\wh a_h(y),y+\wh a_h(y)}(Y)]=h$ where $H_{b,c}(Y)$ is the first exit time of $Y$ from the interval $(b,c)$. Next, let $\tau^h_0=0$ and then
recursively define $\tau^h_{k+1}$ as the first time $Y$ exits the interval $(Y_{\tau^h_k}-\wh a_h(Y_{\tau^h_k}), Y_{\tau^h_k}+\wh a_h(Y_{\tau^h_k}))$ after  $\tau^h_{k}$. 
It follows that the discrete-time process $(Y_{\tau^h_k})_{k\in \N_0}$ has the same law as the Markov chain $(\wh X^h_{kh})_{k\in \N_0}$ defined in~\eqref{eq:def_X1}. We say that the Markov chain $(\wh X^h_{kh})_{k \in \bbN_0}$ is embedded into $Y$ with the sequence of stopping times $(\tau_{k}^h)_{k\in \N_0}$. Moreover, the stopping times satisfy that $E_y[\tau^h_{k+1}-\tau^h_k]=h$. This explains why we refer to $(\wh X^h_{kh})_{k\in \N_0}$ as Embeddable Markov Chain with Expected time Lag $h$ and write $\wh X^h \in \text{EMCEL}(h)$ as a shorthand.
A key observation is that the requirement $E_y[H_{y-\wh a_h(y),y+\wh a_h(y)}(Y)]=h$ can be transformed into the analytic condition
$$
\int_{(y-\wh a_h(y),y+\wh a_h(y))} (\wh a_h(y)-|u-y|)\,m(du)=2h
$$
(see Remark~1.2 in \cite{aku2018cointossing}),
where $m$ denotes the speed measure of $Y$.
This condition is used to define the scale factors $\wh a_h$, $h\in(0,\infty)$,
which determine the scheme.
For more general state spaces than $\R$ this condition has to be adjusted appropriately. This is done in~\eqref{eq:08112019a1} below, where the EMCEL scheme is formally introduced in the general case.
For the discussion of the embedding stopping times
in the general situation
we refer to Section~3 and, in particular, Proposition~3.1 in \cite{aku2018cointossing}.

The properties of the EMCEL scheme discussed in this paper fall into the following three categories:
\begin{enumerate}[(i)]
\item\label{it:31012020a1}
asymptotic properties of the scale factor as $h\searrow0$ (Section~\ref{sec:cat1});
\item\label{it:31012020a2}
stability properties (Section~\ref{sec:cat2});
\item\label{it:31012020a3}
ODE characterization of the scale factor (Section~\ref{sec:cat3}).
\end{enumerate}
In category~\eqref{it:31012020a1} we, in fact, discuss many different properties.
Some of them highlight relationships and differences between the EMCEL and the Euler schemes in the SDE case.
The main result in category~\eqref{it:31012020a2} is Theorem~\ref{prop:comp_princ} (a so-called \emph{comparison principle}),
which implies some ``stabilizing'' behavior of the EMCEL scheme like ``smoothing and tempered growth behavior''.
The main result in category~\eqref{it:31012020a3} is Theorem~\ref{theo:ode} (a certain ODE for the scale factors),
which gives understanding of what regularity the scale factors do have in general
and helps implementing the EMCEL scheme in specific situations.
We refrain from a further discussion in the introduction, as such a discussion would require a thorough description of the setting.
Much more details, also to the meaning of our results, are present in Section~\ref{sec:properties}.

\bigskip
It remains to describe relations with the literature and to state our contributions.
For classical results on the approximation of SDEs with \emph{globally} Lipschitz coefficients via the Euler scheme we refer to the monographs
\cite{KloedenPlaten} and \cite{Pages:18} and to references therein.
In the case of SDEs with \emph{locally} Lipschitz coefficients the Euler scheme is known to converge almost surely (see \cite{gyongy98}),
but, unless the Lipschitz condition is global, neither in strong nor in numerically weak sense (see \cite{HJK} for the terminology and a precise result of this kind).
There are, however, results in positive direction:
e.g., see \cite{KHLY:JCAM2017} and \cite{NgoTaguchi:SPL2017}
and references therein for results on the weak and strong convergence
for the Euler-type approximations of SDEs with discontinuous coefficients.
But, in contrast to the EMCEL scheme, the Euler scheme is defined only for SDEs (not for \emph{all} general diffusions)
and may fail to converge even in the stochastically weak sense
when the SDE coefficients are irregular
(see Section~5.4 in \cite{aku-jmaa}).

It is necessary to say that there exist other schemes capable of approximating
interesting subclasses of general diffusions, e.g., SDEs with discontinuous coefficients; see
\cite{EL},
\cite{LLP2019},
\cite{LejayMartinez},
\cite{milstein2015uniform}
and references therein. However, the properties discussed in our paper are specific for the EMCEL scheme only.
The EMCEL scheme appears as an important example in \cite{aku2018cointossing}, \cite{aku2019wasserstein} and \cite{ku2019exittimes},
but the objects of study in those papers are certain classes of schemes for general diffusions.
The properties studied in the present paper 
are not shared by the mentioned classes of schemes.
Therefore, such properties are not at all discussed in the aforementioned papers.

The approach to approximate solutions of one-dimensional driftless SDEs via sequences of embedding stopping times appears in
\cite{aku-jmaa} and \cite{aku-aihp},
where \cite{aku-aihp} proves a functional limit theorem for irregular SDEs,
and \cite{aku-jmaa} elaborates a scheme for approximating irregular diffusions.
The latter is the EMCEL scheme in the SDE setting, although the name ``EMCEL'' does not appear in \cite{aku-jmaa}.
As compared to the present paper, the scheme of \cite{aku-jmaa} is described in somewhat different terms,
the discussion of the properties in \cite{aku-jmaa} is far less complete than that in the present paper
and is performed under more restrictive assumptions
(precisely: subclass of Example~\ref{ex:sde} below with locally bounded on $I^\circ$ functions $|\eta|$ and $\frac1{|\eta|}$),
which are, however, essential for the proofs in \cite{aku-jmaa}.
To be more specific, \cite{aku-jmaa} does not contain properties of category~\eqref{it:31012020a1}
(except for some very basic statements),
but it contains the mentioned comparison principle and the ODE for the scale factors in its more restrictive setting
(in particular, the ODE in \cite{aku-jmaa} is simpler).
Thus, our contribution essentially includes the properties of category~\eqref{it:31012020a1},
but also in categories \eqref{it:31012020a2} and~\eqref{it:31012020a3} our contribution in comparison to \cite{aku-jmaa}
consists not in mere generalizations of the respective proofs but rather in finding completely new ones (and the right formulation of the ODE),
as those ideas from \cite{aku-jmaa} do not work in our present generality.
More precisely, those proofs from \cite{aku-jmaa} heavily rely on the implicit function theorem, which is not applicable in our situation
(contrary to \cite{aku-jmaa}, the involved functions have kinks in general;
an it is worth noting that the latter statement also follows from
our general description in Theorem~\ref{theo:ode} below).
For further details, see Section~\ref{sec:properties}.

\section{The EMCEL scheme}\label{sec:emcel}

Let $(\Omega, \cF, (\cF_t)_{t \ge 0}, (P_y)_{y \in I}, (Y_t)_{t \ge 0})$ be a one-dimensional continuous strong Markov process in the sense of Section~VII.3 in \cite{RY}.
We refer to this class of processes as \emph{general diffusions} in the sequel. We assume that the state space is an open, half-open or closed interval $I \subseteq \R$. We denote by $I^\circ=(l,r)$ the interior of $I$, where $-\infty\leq l<r\leq \infty$, and we set $\ol I=[l,r]$.
Recall that by the definition we have $P_y[Y_0=y]=1$ for all $y\in I$. 
We further assume that $Y$ is regular. This means that for every $y\in I^\circ$ and $x\in I$ we have that $P_y[H_x(Y)<\infty]>0$, where $H_x(Y)=\inf\{t\geq 0: Y_t=x \}$ (with the usual convention $\inf\emptyset=\infty$).
Moreover, for $a<b$ in $\ol I$ we denote by $H_{a,b}(Y)$
the first exit time of $Y$ from $(a,b)$,
i.e., $H_{a,b}(Y) = H_a(Y)\wedge H_b(Y)$.
Without loss of generality we suppose that the general diffusion $Y$ is conservative (i.e., with infinite life time) and in natural scale.
While the setting in Section~VII.3 in \cite{RY} allows for finite life times, the process can be killed only at the endpoints of $I$ that do not belong to $I$,
in which case we can add such an endpoint to $I$ and make it absorbing; such a procedure gives us a conservative process.
If $Y$ is not in natural scale, then there exists a strictly increasing continuous function
$s\colon I \to \R$, the so-called scale function,
such that $s(Y)$ is in natural scale.

Let $m$ be the speed measure of the Markov process $Y$
on~$I^\circ$ (see~VII.3.7 in \cite{RY}).
Recall that for all $a<b$ in $I^\circ$ we have
\begin{equation}\label{eq:06072018a1}
0<m([a,b])<\infty.
\end{equation}
We also recall that a boundary point $b$ ($b\in\{l,r\}$)
is called accessible if $P_y[H_b(Y)<\infty]>0$ for some, hence, for all, $y\in I^\circ$
(such a definition because $Y$ is conservative).
Due to our assumption that $Y$ is regular and conservative, $b\in\{l,r\}$ is an accessible boundary if and only if $b\in I$.
For both boundaries $l$ and $r$, we assume that if a boundary point is accessible, then it is absorbing.\footnote{The remaining case
that $Y$ has at least one reflecting boundary
(both instantaneous and sticky reflections are allowed here),
is reduced to the case of inaccessible or absorbing boundaries
by a suitable symmetrization
(see Section~6 in \cite{aku2018cointossing}).
In this sense, that assumption does not result in a loss of generality.
It is only convenient for the exposition.}

For what follows, we briefly recall Feller's test for explosions
(see, e.g., Theorem~23.12 in \cite{Kallenberg2002} or Lemma~2.1 in \cite{AKKK17} or Theorem~3.3 in \cite{AKKU2018}):
the left boundary $l$ is accessible if and only if
\begin{equation}\label{eq:09042020a1}
l>-\infty\quad\text{and}\quad\int_{(l,y)}(u-l)\,m(du)<\infty
\end{equation}
for some, equivalently, for all, $y\in I^\circ$ (recall that $Y$ is in natural scale).
Symmetric statement holds of course for the right boundary point~$r$.

\begin{ex}[Driftless SDE with possibly irregular diffusion coefficient]\label{ex:sde}
Consider the case,
where inside $I^\circ$ the process $Y$ is driven by the SDE
\begin{equation}\label{eq:27092018a1}
dY_t=\eta(Y_t)\,dW_t,
\end{equation}
where $W$ is a Brownian motion and
$\eta\colon I^\circ\to\bbR$ is a Borel function
satisfying the Engelbert-Schmidt conditions
\begin{gather}
\eta(x)\ne0\;\;\forall x\in I^\circ,
\label{eq:27092018a2}\\[1mm]
\eta^{-2}\in L^1_{\loc}(I^\circ)
\label{eq:27092018a3}
\end{gather}
($L^1_{\loc}(I^\circ)$ denotes the set of Borel functions
locally integrable on~$I^\circ$).
Under \eqref{eq:27092018a2}--\eqref{eq:27092018a3}
SDE~\eqref{eq:27092018a1}
has a unique in law
(possibly exiting $I^\circ$ in finite time)
weak solution;
see \cite{ES1985} or Theorem~5.5.7 in \cite{KS}.
We make the convention that $Y$ remains constant
after reaching $l$ or $r$ in finite time,
which makes the boundary points absorbing whenever accessible.
This is a particular case of our setting,
where the speed measure of $Y$
on $I^\circ$ is given by the formula
\begin{equation}\label{eq:speed_measure_sde}
m(dx)=\frac 2{\eta^2(x)}\,dx.
\end{equation}
This completes the description in Example~\ref{ex:sde}.
\end{ex}

We now recall the EMCEL approximation scheme introduced in~\cite{aku2018cointossing}. Fix an arbitrary $\ol h\in(0,\infty)$.
For each $h \in (0, \ol h]$ we define the function $\wh a_{h}\colon \ol I \to [0,\infty)$
by the formulas $\wh a_h(l)=\wh a_h(r)=0$ and, for $y\in I^\circ$,
\begin{equation}\label{eq:08112019a1}
\wh a_h(y) = \sup\left\{a \ge 0: y\pm a \in I \text{ and } \frac{1}{2}\int_{(y-a,y+a)} (a-|u-y|)\,m(du) \le h\right\}.
\end{equation}
For now, fix some $h\in(0,\ol h]$.
We consider $h$ as a discretization parameter.
In what follows, the function $\wh a_h$ is referred to as the \emph{EMCEL scale factor}.
Notice that, for all $y\in\ol I$, we have
\begin{equation}\label{eq:08112019a3}
y\pm\wh a_h(y)\in\ol I.
\end{equation}
We next construct an approximation $\wh X^h$ of $Y$
associated to the scale factor $\wh a_h$.
To this end, we fix a starting point $y \in I^\circ$ of $Y$.
Let $(\xi_k)_{k \in \bbN}$ be an iid sequence of random variables,
on a probability space with a measure $P$, satisfying $P(\xi_k = \pm 1) = \frac12$.
We denote by $(\wh X^h_{kh})_{k \in \bbN_0}$ the Markov chain given by the formula
\begin{equation}\label{eq:def_X}
\wh X^h_0 = y
\quad\text{and}\quad
\wh X^h_{(k+1)h} = \wh X^h_{kh}+\wh a_h(\wh X^h_{kh}) \xi_{k+1}, \quad \text{for } k \in \bbN_0,
\end{equation}
which is well-defined due to~\eqref{eq:08112019a3}.
We extend $(\wh X^h_{kh})_{k \in \bbN_0}$ to a continuous-time process by linear interpolation, i.e., for all $t\in[0,\infty)$, we set
\begin{equation}\label{eq:13112017a1}
\wh X^h_t =\wh X^h_{\lfloor t/h \rfloor h} + (t/h - \lfloor t/h \rfloor) (\wh X^h_{(\lfloor t/h \rfloor +1)h} - \wh X^h_{\lfloor t/h \rfloor h}). 
\end{equation}
To highlight the dependence of $\wh X^h=(\wh X^h_t)_{t\in[0,\infty)}$ on the starting point $y\in I^\circ$ we also sometimes
write~$\wh X^{h,y}$.

The process $\wh X^h$ is referred to as Embeddable Markov Chain with Expected time Lag~$h$
(we write shortly $\wh X^h\in\text{EMCEL}(h)$).
The whole family $(\wh X^h)_{h\in (0,\ol h]}$ is referred to as the EMCEL approximation scheme.

It is worth noting that \eqref{eq:08112019a3} can be made more precise:
\begin{equation}\label{eq:09042020a2}
\text{for all }y\in I,\quad\text{we have }y\pm\wh a_h(y)\in I.
\end{equation}
This can be deduced from~\eqref{eq:08112019a1} with the help of Feller's test.
Indeed if, e.g., $y-\wh a_h(y) = l$, then $l>-\infty$ and $\int_{(z,y)}(u-z)\,m(du)\le 2h$ for all $z\in (l,y)$ (see~\eqref{eq:08112019a1}).
Letting $z\downarrow l$ we obtain~\eqref{eq:09042020a1} by monotone convergence.
Feller's test implies that, in this case, $l$ is accessible, and hence $l\in I$.
Observe that \eqref{eq:09042020a2} translates into the fact that the EMCEL scheme never reaches inaccessible boundary points of $I$
(cf.\ \eqref{eq:def_X}--\eqref{eq:13112017a1}),
which is, in fact, a desirable property for a scheme.

\begin{remark}\label{rem:02022020a1}
In more detail, what is included in~\eqref{eq:08112019a1} is as follows.
For all $y\in I^\circ$, define $a_I(y)=\min\{y-l,r-y\}$ ($\in(0,\infty]$)
and notice that, for $a\ge0$, it holds
$y\pm a\in I^\circ$ if and only if $a<a_I(y)$.
Fix $y\in I^\circ$.
It follows from~\eqref{eq:06072018a1} that
$\int_{(y-a,y+a)} (a-|u-y|)\,m(du)<\infty$ whenever $a\in[0,a_I(y))$.
Therefore, the function
$$
a\mapsto\int_{(y-a,y+a)}(a-|u-y|)\,m(du)\equiv\int_I (a-|u-y|)^+\,m(du)
$$
is strictly increasing and continuous on $[0,a_I(y))$ (by the dominated convergence theorem).
The number $\wh a_h(y)$ is thus a unique positive root of the equation (in $a\in[0,a_I(y))$)
\begin{equation}\label{eq:08112019a2}
\frac{1}{2}\int_{(y-a,y+a)} (a-|u-y|)\,m(du) = h
\end{equation}
whenever
$\sup_{a\in[0,a_I(y))}\int_{(y-a,y+a)} (a-|u-y|)\,m(du)>h$,
i.e., when $y$ is ``not too close'' to an accessible boundary point of~$I$.

We now make the last statement more precise with the help of the following notations.
If $l> -\infty$, we define, for all $h\in(0,\ol h]$,
$$
l_h = l+ \inf\left\{ a \in \left(0,\frac{r-l}2\right]:
a<\infty
\;\;\text{and}\;\;
\frac12\int_{(l, l+2a)} (a - |u-(l+a)|)\,m(du) \ge h \right\}, 
$$
where we use the convention $\inf \emptyset = \frac{r-l}2$.
If $l = -\infty$, we set $l_h = -\infty$. 
Similarly, if $r < \infty$, then we define, for all $h\in(0,\ol h]$,
$$
r_h = r- \inf\left\{ a \in \left(0,\frac{r-l}2\right]:
a<\infty
\;\;\text{and}\;\;
\frac12\int_{(r-2a, r)} (a - |u-(r-a)|)\,m(du) \ge h \right\}
$$
with the same convention $\inf \emptyset = \frac{r-l}2$.
If $r = \infty$, we set $r_h = \infty$.
In any case, it holds $l_h\le r_h$
and, moreover, $l_h\le\frac{r+l}2\le r_h$
whenever $l$ and $r$ are finite.

Using Feller's test for explosions once again we see that $l$ is inaccessible if and only if $l_h = l$ for all $h\in (0,\ol h]$.
Similary, $r$ is inaccessible if and only if $r_h = r$ for all $h\in (0,\ol h]$.
Notice that, if $l$ or $r$ are accessible, it holds that $l_h\searrow l$ or $r_h\nearrow r$, respectively, as $h\searrow 0$.

Thus, the definitions of $l_h$ and $r_h$ yield that,
for $y\in(l_h,r_h)$, the number $\wh a_h(y)$ is a unique positive root of~\eqref{eq:08112019a2},
while, for $y\in(l,l_h]$ (resp., $y\in[r_h,r)$),
$\wh a_h(y)$ is chosen to satisfy
\begin{equation}\label{eq:22022019a4}
y-\wh a_h(y)=l\qquad\text{(resp., }y+\wh a_h(y)=r).
\end{equation}
This concludes the detailed description of
what is included in~\eqref{eq:08112019a1}.
\end{remark}

We emphasize that
the EMCEL scheme is 
capable of weakly approximating \emph{every} general diffusion~$Y$.
For an illustration, we now recall a couple of results from
\cite{aku2018cointossing},
\cite{aku2019wasserstein} and
\cite{ku2019exittimes}
and refer to those papers for more detail.

To formulate the results, we need to equip $[0,\infty]\times[0,\infty]\times C([0,\infty),\bbR)$ with a suitable topology. On $[0,\infty]$ we use the topology generated by the metric
$$
d(s,t)=\left|\frac{s}{1+s}-\frac{t}{1+t} \right|,\quad s,t\in[0,\infty]
$$
with the convention that $\frac\infty\infty=1$.
We equip $C([0,\infty),\bbR)$ with the topology
of uniform convergence on compact intervals,
which is generated, e.g., by the metric
$$
\rho(x,y)=\sum_{n=1}^\infty 2^{-n}
\left(\|x-y\|_{C[0,n]}\wedge1\right),
\quad x,y\in C([0,\infty),\bbR),
$$
where $\|\cdot\|_{C[0,n]}$ denotes the sup norm
on $C([0,n],\bbR)$.
Finally, we use the standard product topology on the product space $[0,\infty]\times[0,\infty]\times C([0,\infty),\bbR)$.

The following result is a consequence of Theorem~2.1 in \cite{ku2019exittimes}.

\begin{propo}\label{prop:07022020a2}
For any speed measure $m$ and
for any $y\in I^\circ$, the distributions of the random elements
$(H_l(\wh X^{h,y}),H_r(\wh X^{h,y}),\wh X^{h,y})$ under $P$ converge weakly
to the distribution of $(H_l(Y),H_r(Y),Y)$ under $P_y$, as $h\to 0$; i.e.,
for every bounded and continuous
functional
$F\colon [0,\infty]\times[0,\infty]\times C([0,\infty),\bbR)\to \R$, it holds that
\begin{equation}\label{eq:23032019a1}
E[F(H_l(\wh X^{h,y}),H_r(\wh X^{h,y}),\wh X^{h,y})]\to E_y[F(H_l(Y),H_r(Y),Y)], \quad h\to 0.
\end{equation}
\end{propo}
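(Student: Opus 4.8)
The plan is to deduce the statement from Theorem~2.1 in \cite{ku2019exittimes}, a functional limit theorem (with joint convergence of exit times) for a general class of embeddable Markov chain approximations of general diffusions; the substantive task is then to verify that the EMCEL scheme falls into the scope of that theorem. Concretely, I would check three things: (a) for each $h\in(0,\ol h]$ the chain $\wh X^{h,y}$ is embeddable into $Y$ along a sequence of stopping times $(\tau^h_k)_{k\in\bbN_0}$ with $\tau^h_0=0$ and $E_y[\tau^h_{k+1}-\tau^h_k\mid\cF_{\tau^h_k}]=h$ --- this is Proposition~3.1 in \cite{aku2018cointossing} applied to the scale factor $\wh a_h$ of \eqref{eq:08112019a1}, which was constructed precisely for this; (b) the scale factors vanish locally uniformly, $\sup_{y\in K}\wh a_h(y)\to0$ as $h\to0$ for every compact $K\subset I^\circ$ --- indeed, if $\wh a_{h_n}(y_n)\ge\eps>0$ along some $h_n\to0$ with $y_n\to y\in I^\circ$, then (since $l_{h_n}\to l$, $r_{h_n}\to r$, so eventually $y_n\in(l_{h_n},r_{h_n})$ and $\wh a_{h_n}(y_n)$ solves \eqref{eq:08112019a2}) the left-hand side of \eqref{eq:08112019a2} at $(y_n,\wh a_{h_n}(y_n))$ is bounded below by a positive constant depending only on $\eps$ and the $m$-mass of a fixed neighbourhood of $y$ (using \eqref{eq:06072018a1}), contradicting its equality with $h_n\to0$; and (c) the scheme never reaches an inaccessible boundary point, which is \eqref{eq:09042020a2}.

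Granting the machinery of \cite{ku2019exittimes}, the heart of the argument is as follows. Realise $\wh X^{h,y}$ on the canonical space of $Y$ by putting $\wh X^h_{kh}=Y_{\tau^h_k}$ and interpolating as in \eqref{eq:13112017a1}. The central estimate is that the random time change $t\mapsto\tau^h_{\lfloor t/h\rfloor}$ converges to the identity, locally uniformly in $t$ and in $P_y$-probability, as $h\to0$. Writing $\tau^h_{\lfloor t/h\rfloor}-t=\sum_{k=0}^{\lfloor t/h\rfloor-1}\bigl((\tau^h_{k+1}-\tau^h_k)-h\bigr)+\bigl(h\lfloor t/h\rfloor-t\bigr)$, the last term is at most $h$, while the sum is a zero-mean $(\cF_{\tau^h_k})_k$-martingale, to which one applies Doob's maximal inequality together with the second-moment control on the embedding increments supplied by \cite{ku2019exittimes}. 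Since $Y$ has continuous paths, $Y_{\tau^h_{\lfloor t/h\rfloor}}\to Y_t$ locally uniformly in $P_y$-probability; passing from this to the linear interpolation \eqref{eq:13112017a1} costs only the oscillation of $Y$ over the time intervals $[\tau^h_k,\tau^h_{k+1}]$, along each of which --- by the embedding --- $Y$ stays within $\wh a_h(Y_{\tau^h_k})=o(1)$ of $Y_{\tau^h_k}$ by (b). Hence $\wh X^{h,y}\to Y$ in $C([0,\infty),\bbR)$ in $P_y$-probability, and a fortiori in distribution.

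It remains to upgrade path convergence to the joint convergence of $(H_l,H_r,\cdot)$ in the product topology, where $[0,\infty]$ carries the metric $d$ from the excerpt, so that the value $\infty$ is handled automatically. If $l$ is inaccessible, then $H_l(Y)=\infty$ $P_y$-a.s.\ and, by \eqref{eq:09042020a2}, $\wh X^{h,y}$ never reaches $l$, so $H_l(\wh X^{h,y})=\infty$ as well; the right boundary is symmetric. If $l$ is accessible it is absorbing, so $Y$ is frozen at $l$ from $H_l(Y)$ on; by regularity and continuity of $Y$, for every $l'\in(l,r)$ the process $Y$ drops strictly below $l'$ before $H_l(Y)$ and never afterwards, and the same dichotomy transfers to $\wh X^{h,y}$ through the time change because on each $[kh,(k+1)h]$ the interpolated path $\wh X^{h,y}$ and the path of $Y$ on $[\tau^h_k,\tau^h_{k+1}]$ both stay within $\wh a_h(\wh X^h_{kh})\to0$ of $\wh X^h_{kh}$. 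Letting $l'\searrow l$ then identifies $\lim_h H_l(\wh X^{h,y})=H_l(Y)$ $P_y$-a.s., jointly with the path; this is exactly what Theorem~2.1 in \cite{ku2019exittimes} delivers once (a)--(c) are in place.

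The main obstacle, as the previous paragraph shows, is that hitting times are not continuous functionals for the topology of uniform convergence on compact intervals, so one cannot simply invoke the continuous mapping theorem; the argument must exploit the specific structure --- absorption at accessible boundaries, property \eqref{eq:09042020a2} at inaccessible ones, and the embedding --- to control the path near the (possibly infinite) exit time. A secondary technical point is that for a general, possibly quite irregular speed measure $m$ the naive $L^2$ estimate on the increments $\tau^h_{k+1}-\tau^h_k$ is not immediately at hand, which is one reason why this proposition is routed through the general estimates of \cite{ku2019exittimes} rather than proved by a direct elementary argument.
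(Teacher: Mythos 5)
Your proposal is correct and takes essentially the same route as the paper, which offers no proof of its own but simply states that the proposition is a consequence of Theorem~2.1 in \cite{ku2019exittimes}; your verification of the hypotheses (embeddability via Proposition~3.1 in \cite{aku2018cointossing}, locally uniform vanishing of the scale factors as in Proposition~\ref{prop:09022020a1}, and property~\eqref{eq:09042020a2}) is exactly the reduction the paper implicitly relies on.
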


We remark that the weak convergence in~\eqref{eq:23032019a1}
holds jointly for paths and \emph{exit times}
(i.e., hitting times $H_l$ and $H_r$ of the boundary points of the state space),
which is a stronger statement than the weak convergence in the path space
because the exit times are, in general, essentially discontinuous path functionals
(e.g., $H_l$ is
discontinuous with positive probability
whenever the boundary $l$ is accessible).

\medskip
The following result about convergence rates
is a consequence of Theorem~1.7 in \cite{aku2019wasserstein}.

\begin{propo}\label{prop:26022019a1}
Suppose that the speed measure $m$ satisfies
\begin{equation}\label{eq:cond_c}
m(dx)\ge \frac{2}{k(1+x^2)}dx\quad\text{on }I^\circ
\end{equation}
with some $k\in(0,\infty)$.
Let $T\in (0,\infty)$ and let $F \colon C([0,T],I) \to \R$ be a locally Lipschitz continuous path functional
with polynomially growing Lipschitz constant,
i.e., there exist $L,\alpha\in [0,\infty)$ such that
for all $x_1,x_2 \in C([0,T],I)$ it holds
\begin{equation}\label{eq:LipConstPol}
|F(x_1)-F(x_2)|\le L\left\{1+(\|x_1\|_{C[0,T]}\vee\|x_2\|_{C[0,T]})^\alpha\right\}\|x_1-x_2\|_{C[0,T]}.
\end{equation}
Then for every $\eps\in(0,\frac{1}{4})$ and $y\in I^\circ$
there exist a constant $C\in [0,\infty)$ such that
for all $h\in (0,\ol h)$ it holds
\begin{equation}\label{eq:rate_path_locLip}
\left|E\left[F(\wh X^{h,y}_{t};\,t\in[0,T])\right]
-E_y\left[F(Y_t;\,t\in[0,T])\right]\right|\
\le C
h^{\frac{1}{4}-\eps}.
\end{equation}
Moreover, in the case $F(x)=f(x(T))$  (with some $f\colon I\to\bbR$)
when the functional $F$ depends only on the terminal value $x(T)$
of $x\in C([0,T],I)$, the rate is $\frac14$, i.e.,
\eqref{eq:rate_path_locLip} holds with $\eps=0$.
\end{propo}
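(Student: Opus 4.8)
The plan is to derive both assertions from Theorem~1.7 in \cite{aku2019wasserstein}. That theorem concerns a class of Markov chain approximations of general diffusions which contains, as a particular member, the EMCEL scheme with parameter $h$; under a lower bound on the speed measure of exactly the form~\eqref{eq:cond_c} it delivers quantitative transportation-distance bounds between $\Law(\wh X^{h,y})$ and the law of $Y$ started at $y$, on the path space as well as at a single time. The first step is thus bookkeeping: check that EMCEL fits that framework and that~\eqref{eq:cond_c} is precisely the hypothesis needed, and record the resulting estimates. Realised on a common probability space carrying $\wh X^{h,y}$ and $Y$, they read
\[
E\bigl[\|\wh X^{h,y}-Y\|_{C[0,T]}^{2}\bigr]^{1/2}\le C_1\,h^{1/4-\eps}
\qquad\text{and}\qquad
E\bigl[|\wh X^{h,y}_T-Y_T|^{2}\bigr]^{1/2}\le C_1'\,h^{1/4},
\]
the second being the sharp single-time estimate, without the $\eps$-loss that accompanies the passage to the supremum over $[0,T]$.

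The second step supplies the moment bounds, uniform in $h$,
\[
\sup_{h\in(0,\ol h)}E\bigl[\|\wh X^{h,y}\|_{C[0,T]}^{q}\bigr]<\infty
\quad\text{and}\quad
E_y\bigl[\|Y\|_{C[0,T]}^{q}\bigr]<\infty\qquad\text{for every }q\in[1,\infty),
\]
which are indispensable because the Lipschitz constant in~\eqref{eq:LipConstPol} is only assumed to grow polynomially. For $Y$ this follows from~\eqref{eq:cond_c} by the classical Lyapunov/Gronwall argument, which also re-derives via Feller's test~\eqref{eq:09042020a1} that $Y$ does not explode. For the chain, the crucial point is that~\eqref{eq:08112019a2} and~\eqref{eq:cond_c} together yield the linear-growth bound
\[
\wh a_h(y)\le C_2\,\sqrt{h}\,(1+|y|)\qquad\text{for all }y\in I^\circ\text{ and all }h\in(0,\ol h)
\]
(for $|y|$ large one bounds $1+u^{2}$ below on $(y-a,y+a)$ and uses $\int_{(y-a,y+a)}(a-|u-y|)\,du=a^{2}$; for bounded $|y|$ one bounds $1+u^{2}$ above); consequently $\wh a_h(\cdot)^{2}/h\le 2C_2^{2}(1+(\cdot)^{2})$ uniformly in $h$, so the martingale $(\wh X^{h,y}_{kh})_{k\ge0}$ defined by~\eqref{eq:def_X} has a compensator growing at an $h$-independent rate, and a discrete Gronwall estimate for $E[1+(\wh X^{h,y}_{kh})^{2}]$ together with Doob's inequality (and its $L^{q}$ / Burkholder--Davis--Gundy analogues for $q>2$) gives the claim.

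The third step combines the two. For a general $F$ obeying~\eqref{eq:LipConstPol}, on the coupling space the Cauchy--Schwarz inequality gives
\begin{align*}
\bigl|E[F(\wh X^{h,y})]-E_y[F(Y)]\bigr|
&\le L\,E\Bigl[\bigl(1+(\|\wh X^{h,y}\|_{C[0,T]}\vee\|Y\|_{C[0,T]})^{\alpha}\bigr)^{2}\Bigr]^{1/2}\\
&\quad\times E\bigl[\|\wh X^{h,y}-Y\|_{C[0,T]}^{2}\bigr]^{1/2};
\end{align*}
here the first factor is bounded uniformly in $h$ by the second step, the second is at most $C_1 h^{1/4-\eps}$ by the first step, which proves~\eqref{eq:rate_path_locLip}. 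For $F(x)=f(x(T))$ one argues in the same way but replaces $\|\wh X^{h,y}-Y\|_{C[0,T]}$ by $|\wh X^{h,y}_T-Y_T|$ and uses the sharper single-time bound $E[|\wh X^{h,y}_T-Y_T|^{2}]^{1/2}\le C_1' h^{1/4}$; since the moment bounds again control $E[(1+(|\wh X^{h,y}_T|\vee|Y_T|)^{\alpha})^{2}]^{1/2}$ uniformly in $h$, one obtains~\eqref{eq:rate_path_locLip} with $\eps=0$.

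I expect the main obstacle to be the uniform-in-$h$ polynomial moment bound for the EMCEL chain in the second step. Since $\wh a_h$ is defined only implicitly by~\eqref{eq:08112019a2}, there is no SDE to read off for $\wh X^{h,y}$, so the linear-growth estimate $\wh a_h(y)\le C_2\sqrt{h}\,(1+|y|)$ — which is where~\eqref{eq:cond_c} enters on the level of the scheme and which is what makes the moment bounds $h$-uniform — must be wrung out of the integral relation directly, with a constant that must not degenerate as $h\searrow0$. If these ingredients are already available in \cite{aku2019wasserstein} (as seems likely, given that this very statement is noted to be a consequence of Theorem~1.7 there), the second step reduces to a citation and the whole argument becomes a short deduction.
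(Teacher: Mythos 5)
The paper does not actually prove this proposition: it is stated as a direct consequence of Theorem~1.7 in \cite{aku2019wasserstein}, with no argument given beyond the citation. Your proposal is therefore not comparable to an in-paper proof; it is a reconstruction of how the cited result (or its proof) yields the statement. As such, its architecture --- (1) a coupling estimate from the embedding construction, (2) uniform-in-$h$ polynomial moment bounds for the chain and for $Y$, (3) Cauchy--Schwarz to absorb the polynomially growing Lipschitz constant --- is the standard and correct route, and it matches the strategy of the cited work. You also correctly identified where hypothesis~\eqref{eq:cond_c} enters at the level of the scheme: a lower bound on the speed measure gives an upper bound on the scale factor, and your claimed estimate $\wh a_h(y)\le C_2\sqrt h\,(1+|y|)$ is exactly what Proposition~\ref{prop:09022020b1} of this paper (case (a) with $p=2$) delivers, so that ingredient is sound.

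Two caveats. First, your step~1 simply asserts that Theorem~1.7 of \cite{aku2019wasserstein} provides mean-square path-coupling bounds $E[\|\wh X^{h,y}-Y\|_{C[0,T]}^2]^{1/2}\le C_1 h^{1/4-\eps}$ and the sharper single-time bound at rate $h^{1/4}$. If the cited theorem is formulated as a Wasserstein-$1$ (or weak-error) statement rather than an $L^2$ coupling bound, your Cauchy--Schwarz step does not go through as written and you would need either a $W_2$ bound or a truncation argument; and the $\eps$-free rate for the marginal at time $T$ is, in the cited paper, a separate (marginal-distribution) result rather than a corollary of the path-space estimate. Since the entire quantitative content of the proposition lives in precisely these two estimates, a proof that imports them by assertion has not really engaged with the hard part --- which is fine if the goal is only to reduce the proposition to the citation, as the paper itself does, but should be acknowledged as such. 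Second, your "classical Lyapunov/Gronwall argument" for the moments of $Y$ needs a word of care: $Y$ is a general diffusion given only by its speed measure, not by an SDE, so the moment bound must be obtained from~\eqref{eq:cond_c} via the speed-measure characterization (e.g.\ occupation time formula or comparison with a time-changed Brownian motion) rather than from a drift/diffusion coefficient estimate.
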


The role of assumption~\eqref{eq:cond_c} is to ensure that
the expected values in~\eqref{eq:rate_path_locLip} exist.
We remark that \eqref{eq:cond_c} does not exclude sticky features
mentioned above, as these are modeled via atoms in $m$.
Notice that in the case,
where $Y$ is a solution of a driftless SDE
of the form $dY_t=\eta(Y_t)\,dW_t$ (cf.\ Example~\ref{ex:sde}),
assumption~\eqref{eq:cond_c} means that $\eta$
has at most linear growth;
however, $\eta$ can be arbitrarily irregular
(just a Borel function satisfying
\eqref{eq:27092018a2}--\eqref{eq:27092018a3}).
Finally, we stress that the rate $\frac14-$
in Proposition~\ref{prop:26022019a1}
cannot be considered as too slow
because it holds in arbitrarily irregular cases
(as discussed above, the only assumption
\eqref{eq:cond_c} is not a regularity condition)
and refer to \cite{aku2019wasserstein} for more detail.

We, finally, mention that \cite{aku2019wasserstein} contains also
results about convergence rates of the EMCEL scheme
in the Wasserstein distances.

\section{Properties of the EMCEL scheme}\label{sec:properties}
In this section we gather several properties of the approximating Markov chain~\eqref{eq:def_X},
which are encoded in the functions (scale factors) $\wh a_h$, $h\in(0,\ol h]$.

\subsection{Dependence on the discretization parameter}\label{sec:cat1}
We first discuss properties from category~\eqref{it:31012020a1} of the introduction.
Specifically, here we study the
asymptotic
behavior of the EMCEL scale factors $\wh a_h$ as $h\searrow0$.
We also discuss relationships and differences with the Euler scheme in the SDE case (cf.\ Example~\ref{ex:sde}).

\begin{propo}\label{prop:07022020a1}
For any $y\in I^\circ$, the function
\begin{equation}\label{eq:02022020a1}
(0,\ol h]\ni h\mapsto\wh a_h(y)
\end{equation}
is strictly positive and nondecreasing.
Moreover, for any $y\in I^\circ$, there is $h_0(y)\in(0,\ol h]$
such that the function in~\eqref{eq:02022020a1}
is strictly increasing on $(0,h_0(y)]$.
\end{propo}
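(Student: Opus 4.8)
The plan is to work from the defining property of the scale factor. Fix $y\in I^\circ$ and recall from Remark~\ref{rem:02022020a1} that the function
\[
g_y(a):=\frac12\int_{(y-a,y+a)}(a-|u-y|)\,m(du)\equiv\frac12\int_I(a-|u-y|)^+\,m(du)
\]
is strictly increasing and continuous on $[0,a_I(y))$, with $g_y(0)=0$. Strict positivity of $h\mapsto\wh a_h(y)$ is immediate: since $m([y-\delta,y+\delta])>0$ for every small $\delta>0$ by~\eqref{eq:06072018a1}, we have $g_y(a)>0$ for every $a>0$, hence $g_y$ takes arbitrarily small positive values, so for every $h>0$ the root (or, near an accessible boundary, the truncated value $\min\{y-l,r-y\}$ as in~\eqref{eq:22022019a4}) is strictly positive.

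For monotonicity, I would argue as follows. First treat the case $y\in(l_h,r_h)$ for the $h$ in question, where $\wh a_h(y)$ is the unique positive root of $g_y(a)=h$. If $h_1<h_2$, then because $g_y$ is strictly increasing, the root of $g_y(a)=h_1$ is strictly smaller than the root of $g_y(a)=h_2$, giving strict increase on the range of $h$ where $y$ stays in the interior regime. The only way nondecrease can fail to be strict is when $y\in(l,l_h]$ or $y\in[r_h,r)$, i.e., when $\wh a_h(y)$ has hit the boundary cap and is pinned at $\min\{y-l,r-y\}$ by~\eqref{eq:22022019a4}; there $\wh a_h(y)$ is constant in $h$. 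To get the overall "nondecreasing" claim, I would observe that enlarging $h$ weakly relaxes the constraint set in the $\sup$ in~\eqref{eq:08112019a1} (the condition $\frac12\int_{(y-a,y+a)}(a-|u-y|)\,m(du)\le h$ is weaker for larger $h$, and the condition $y\pm a\in I$ does not depend on $h$), so the supremum is nondecreasing in $h$; this handles all cases uniformly, including the transition between the two regimes, without case analysis.

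For the final refinement — existence of $h_0(y)$ with strict increase on $(0,h_0(y)]$ — I would invoke the criterion from Remark~\ref{rem:02022020a1} together with the behavior of $l_h,r_h$ as $h\searrow0$. Recall that $l_h\searrow l$ and $r_h\nearrow r$ when the respective boundary is accessible, and $l_h=l$, $r_h=r$ for all $h$ when it is inaccessible. In either case there is $h_0(y)\in(0,\ol h]$ small enough that $y\in(l_{h_0(y)},r_{h_0(y)})$, hence $y\in(l_h,r_h)$ for all $h\le h_0(y)$, so on that range $\wh a_h(y)$ is the unique positive root of $g_y(a)=h$ and the strict monotonicity of $g_y$ gives strict increase of $h\mapsto\wh a_h(y)$, as shown above.

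The main obstacle, such as it is, is bookkeeping the boundary regime cleanly: one must be careful that "strictly increasing on $(0,h_0(y)]$" really does hold right up to the endpoint $h_0(y)$ (so one should pick $h_0(y)$ with $y$ strictly inside $(l_{h_0(y)},r_{h_0(y)})$, using that $l_h,r_h$ are monotone in $h$, so that a closed interval of $h$-values stays in the interior regime), and that the global "nondecreasing" statement is obtained from the monotonicity of the constraint set rather than from a root comparison that is only valid in the interior regime. Everything else is a direct consequence of the strict monotonicity and continuity of $g_y$ already recorded in Remark~\ref{rem:02022020a1}.
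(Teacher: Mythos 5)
Your proposal is correct and takes essentially the same route as the paper: the paper's proof is a one-line reference to the detailed description in Remark~\ref{rem:02022020a1} (strict monotonicity and continuity of $a\mapsto\frac12\int_{(y-a,y+a)}(a-|u-y|)\,m(du)$, the two regimes separated by $l_h,r_h$, and the cap~\eqref{eq:22022019a4}), with $h_0(y)=\sup\{h\in(0,\ol h]:y\in[l_h,r_h]\}$; you simply spell out those details, and your slightly more conservative choice of $h_0(y)$ (requiring $y$ strictly inside $(l_{h_0(y)},r_{h_0(y)})$) works equally well by the monotonicity of $l_h$ and $r_h$ in $h$.
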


\begin{proof}
Both claims follow from the detailed description of the EMCEL scale factors $\wh a_h$, $h\in(0,\ol h]$, presented in Remark~\ref{rem:02022020a1}.
Specifically, $h_0(y)$ in the second claim can be defined as
$\sup\{h\in(0,\ol h]:y\in[l_h,r_h]\}$.
\end{proof}

\begin{propo}\label{prop:09022020a1}
For any $m\in\bbN$,
defining
$K_m=I\cap[-m,m]$, we have
\begin{equation}\label{eq:02022020a2}
\lim_{h\to0}\sup_{y\in K_m}\wh a_h(y)=0.
\end{equation}
\end{propo}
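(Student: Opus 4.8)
The plan is to derive the uniform statement from two elementary facts about the scale factor, combined with a compactness argument on $K_m$. The first fact is a comparison property read off directly from~\eqref{eq:08112019a1}: for $y\in I^\circ$, $\delta>0$ with $[y-\delta,y+\delta]\subseteq I^\circ$, and $h\in(0,\ol h]$, one has
$$
\frac12\int_{(y-\delta,y+\delta)}(\delta-|u-y|)\,m(du)>h \quad\Longrightarrow\quad \wh a_h(y)\le\delta .
$$
Indeed, the set of $a$'s over which the supremum in~\eqref{eq:08112019a1} is taken is an interval containing $0$ --- it is the intersection of $\{a\ge0:y\pm a\in I\}$ with $\{a\ge0:\frac12\int_{(y-a,y+a)}(a-|u-y|)\,m(du)\le h\}$, and the second set is of the form $[0,\cdot)$ or $[0,\cdot]$ because $a\mapsto\int_I(a-|u-y|)^+\,m(du)$ is nondecreasing --- so if its supremum $\wh a_h(y)$ were strictly larger than $\delta$, then $\delta$ would lie in this set, contradicting the displayed inequality. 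The second fact is that $\wh a_h(y)\le\min\{y-l,\,r-y\}$ for every $y\in I$ and $h\in(0,\ol h]$, which is immediate from~\eqref{eq:08112019a3}.

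\textbf{Reduction.}
Fix $\eps>0$; since $\wh a_h\ge0$, it suffices to show $\limsup_{h\downarrow0}\sup_{y\in K_m}\wh a_h(y)\le\eps$. If $2\eps\ge r-l$, the second fact gives $\wh a_h(y)\le\min\{y-l,r-y\}\le\eps$ for every $y\in K_m$ and every $h$, and we are done; so assume $2\eps<r-l$ and split $K_m$ into $G_\eps:=K_m\cap(l+\eps,r-\eps)$ and $K_m\setminus G_\eps$. For $y\in K_m\setminus G_\eps$ one has $y-\eps\le l$ or $y+\eps\ge r$, hence $\wh a_h(y)\le\eps$ for every $h$ by the second fact. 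For $y\in G_\eps$ we have $[y-\eps,y+\eps]\subseteq I^\circ$, so the first fact with $\delta=\eps$ applies and shows that $\wh a_h(y)\le\eps$ as soon as $h<\frac12\int_{(y-\eps,y+\eps)}(\eps-|u-y|)\,m(du)$. Therefore, once it is shown that
$$
m_\eps:=\inf_{y\in G_\eps}\frac12\int_{(y-\eps,y+\eps)}(\eps-|u-y|)\,m(du)>0 \qquad(\text{with }\inf\emptyset:=+\infty),
$$
we get $\sup_{y\in K_m}\wh a_h(y)\le\eps$ for every $h\in(0,\ol h]$ with $h<m_\eps$, hence $\limsup_{h\downarrow0}\sup_{y\in K_m}\wh a_h(y)\le\eps$, which completes the proof.

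\textbf{The main point: $m_\eps>0$.}
This is the crux and the only delicate step, because $G_\eps$ need not be relatively closed in $I^\circ$ and the integrals above can even be infinite near an inaccessible boundary, so a naive continuity-plus-compactness argument does not go through. I would argue as follows. The closure $\ol{G_\eps}$ is a compact subset of $I^\circ$ (contained in $[l+\eps,r-\eps]$ when $l,r$ are finite, and bounded in any case). For every $y\in\ol{G_\eps}$ the crude estimate
$$
\frac12\int_{(y-\eps,y+\eps)}(\eps-|u-y|)\,m(du)\ \ge\ \frac{\eps}{4}\,m((y-\eps/2,\,y+\eps/2))
$$
holds, with $m((y-\eps/2,y+\eps/2))>0$ by~\eqref{eq:06072018a1}, since $[y-\eps/4,y+\eps/4]$ is a nondegenerate compact subinterval of $I^\circ$. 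To make the right-hand side uniformly positive over $G_\eps$, cover the compact set $\ol{G_\eps}$ by finitely many intervals $(y_i-\eps/4,y_i+\eps/4)$, $i=1,\dots,N$, with $y_i\in\ol{G_\eps}$; any $y\in G_\eps$ lies in some $(y_i-\eps/4,y_i+\eps/4)\subseteq(y-\eps/2,y+\eps/2)$, so $m((y-\eps/2,y+\eps/2))\ge\min_{1\le i\le N}m((y_i-\eps/4,y_i+\eps/4))>0$, a minimum of finitely many positive numbers. Hence $m_\eps\ge\frac{\eps}{4}\min_{1\le i\le N}m((y_i-\eps/4,y_i+\eps/4))>0$, as required. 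This covering step is exactly what bypasses the possible infiniteness and discontinuity of the ``tent integrals'' near the endpoints of $I$.
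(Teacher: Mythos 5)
Your proof is correct. It rests on the same core estimate as the paper's own argument --- bounding the ``tent integral'' from below by $\frac{\eps}{4}\,m((y-\eps/2,y+\eps/2))$ and invoking \eqref{eq:06072018a1}, with points near an accessible finite boundary disposed of via \eqref{eq:08112019a3} --- but the compactness is packaged differently. The paper argues by contradiction: it extracts a monotone convergent subsequence $y_n\to y_\infty\in I^\circ$ from a hypothetical sequence with $\wh a_{h_n}(y_n)\ge\eps$, uses a Fatou-type lower semicontinuity of $m$ along the moving intervals $(y_n-\eps/2,y_n+\eps/2)$ to get a positive liminf, and contrasts this with the upper bound $h_n\to0$ coming from \eqref{eq:08112019a1}. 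You instead argue directly: you split $K_m$ into a boundary layer and the interior part $G_\eps$, and obtain a uniform positive lower bound $m_\eps$ for the tent integrals over $G_\eps$ via a finite subcover of $\ol{G_\eps}$ by intervals of radius $\eps/4$. Your route buys a quantitative statement (an explicit threshold $h<m_\eps$ below which $\sup_{K_m}\wh a_h\le\eps$) and avoids the measure-semicontinuity step along moving intervals; the paper's route is a bit shorter. Both proofs are complete, and your preliminary ``first fact'' (monotonicity of $a\mapsto G(y,a)$ forcing $\wh a_h(y)\le\delta$ once the tent integral at radius $\delta$ exceeds $2h$) is a clean substitute for the paper's direct appeal to the defining inequality in \eqref{eq:08112019a1}.
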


Notice that this result is a bit stronger than $\lim_{h\to0}\sup_{y\in K}\wh a_h(y)=0$ for all compact subsets $K$ of~$I^\circ$:
suprema over one-sided neighborhoods of finite boundary points of $I^\circ$ are also included in~\eqref{eq:02022020a2}.

\begin{proof}
Assume that \eqref{eq:02022020a2} does not hold for some $m\in\bbN$, i.e., there exist $\eps>0$
and sequences $\{h_n\}\subset(0,\ol h]$, $\{y_n\}\subset K_m$
such that $h_n\to0$ and $\wh a_{h_n}(y_n)\ge\eps$ for all $n$.
By considering a suitable subsequence we assume without loss of generality that
$\{y_n\}$ is monotone and $y_n\to y_\infty$ with some $y_\infty\in K_m$.
Notice that $y_\infty\in I^\circ$ and its distance from the boundary of $I^\circ$ is at least $\eps$,
otherwise \eqref{eq:08112019a3} would be violated for $y_n$ with sufficiently large $n$.
Then we get
\begin{align*}
&\liminf_{n\to\infty}\frac12\int_{(y_n-\wh a_{h_n}(y_n),y_n+\wh a_{h_n}(y_n))}(\wh a_{h_n}(y_n)-|u-y_n|)\,m(du)\\[1mm]
&\ge\liminf_{n\to\infty}\frac12\int_{(y_n-\frac12\wh a_{h_n}(y_n),y_n+\frac12\wh a_{h_n}(y_n))}\frac{\wh a_{h_n}(y_n)}2\,m(du)\\[1mm]
&\ge\frac\eps4\liminf_{n\to\infty}m\big((y_n-\eps/2,y_n+\eps/2)\big)\ge\frac\eps4 m\big((y_\infty-\eps/2,y_\infty+\eps/2)\big)>0.
\end{align*}
Observing that $\{y_n\}\subset I^\circ$ (due to $\wh a_{h_n}(y_n)>0$),
we get from~\eqref{eq:08112019a1}
$$
\frac12\int_{(y_n-\wh a_{h_n}(y_n),y_n+\wh a_{h_n}(y_n))}(\wh a_{h_n}(y_n)-|u-y_n|)\,m(du)\le h_n\to0,\quad n\to\infty.
$$
The obtained contradiction concludes the proof.
\end{proof}

Our next aim is to discuss the speed of convergence of $\wh a_h(y)$ to zero, as $h\to0$, for any fixed $y\in I^\circ$.
The next result helps establishing the order of convergence (in $h$) in many specific situations.

\begin{lemma}\label{prop:27092018a1}
For any $y\in I^\circ$, there exists $h_0\in(0,\ol h]$ such that, for $h\in(0,h_0]$, we have the inequalities
\begin{equation}\label{eq:04022020a1}
\wh a_h(y)\sup_{\lambda\in[0,1]}\left\{(1-\lambda)m([y-\lambda\wh a_h(y),y+\lambda\wh a_h(y)])\right\}
\le2h\le\wh a_h(y) m((y-\wh a_h(y),y+\wh a_h(y)))
\end{equation}
and, in particular,
\begin{equation}\label{eq:30092018a1}
\frac{\wh a_h(y)}2 m([y-\wh a_h(y)/2,y+\wh a_h(y)/2])\le2h\le\wh a_h(y) m((y-\wh a_h(y),y+\wh a_h(y))).
\end{equation}
\end{lemma}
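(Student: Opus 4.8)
The plan is to reduce the statement to the defining equation for the scale factor and then invoke only the elementary pointwise bounds $0\le\wh a_h(y)-|u-y|\le\wh a_h(y)$ on the interval $(y-\wh a_h(y),y+\wh a_h(y))$.

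Fix $y\in I^\circ$. Since $l_h\to l<y$ and $r_h\to r>y$ as $h\to0$ (recall Remark~\ref{rem:02022020a1}), there is $h_0\in(0,\ol h]$ with $y\in(l_h,r_h)$ for every $h\in(0,h_0]$. For such $h$, Remark~\ref{rem:02022020a1} tells us that $\wh a_h(y)>0$, that $\wh a_h(y)<a_I(y)$, so that $(y-\wh a_h(y),y+\wh a_h(y))$ is a subinterval of $I^\circ$ whose closure is compact in $I^\circ$ (hence all speed-measure quantities appearing below are finite by~\eqref{eq:06072018a1}), and that $\wh a_h(y)$ solves~\eqref{eq:08112019a2}, i.e.
\[
\int_{(y-\wh a_h(y),\,y+\wh a_h(y))}\bigl(\wh a_h(y)-|u-y|\bigr)\,m(du)=2h .
\]

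For the right-hand inequality in~\eqref{eq:04022020a1} I would bound the integrand in the last display from above by $\wh a_h(y)$, which gives $2h\le\wh a_h(y)\,m\bigl((y-\wh a_h(y),y+\wh a_h(y))\bigr)$. For the left-hand inequality, fix $\lambda\in[0,1]$: the case $\lambda=1$ is trivial since the left-hand side vanishes, while for $\lambda\in[0,1)$ the compact interval $J_\lambda:=[y-\lambda\wh a_h(y),\,y+\lambda\wh a_h(y)]$ is contained in $(y-\wh a_h(y),y+\wh a_h(y))$ and on $J_\lambda$ one has $\wh a_h(y)-|u-y|\ge(1-\lambda)\wh a_h(y)$. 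As the integrand is nonnegative on the whole interval, restricting the integral in the last display to $J_\lambda$ and using this pointwise bound yields $2h\ge(1-\lambda)\wh a_h(y)\,m(J_\lambda)$; dividing by $\wh a_h(y)>0$ and taking the supremum over $\lambda\in[0,1]$ gives the left-hand inequality of~\eqref{eq:04022020a1}. Finally, \eqref{eq:30092018a1} is the special case $\lambda=\tfrac12$, which is dominated by that supremum, so it follows at once.

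I do not expect a genuine obstacle here; the only point needing care is the choice of $h_0$, i.e.\ checking that for $h$ small the value $\wh a_h(y)$ is truly characterized by~\eqref{eq:08112019a2} and not truncated because $y$ lies too close to an accessible boundary — in that truncated regime the right-hand inequality of~\eqref{eq:04022020a1} genuinely can fail, which is precisely why the statement is only asymptotic in $h$. This is exactly the $(l_h,r_h)$ bookkeeping recorded in Remark~\ref{rem:02022020a1}, so it suffices to cite it.
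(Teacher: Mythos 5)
Your proof is correct and follows essentially the same route as the paper's: pick $h_0$ so that $y\in(l_h,r_h)$ for $h\le h_0$, invoke the defining identity $\int_{(y-\wh a_h(y),y+\wh a_h(y))}(\wh a_h(y)-|u-y|)\,m(du)=2h$ from Remark~\ref{rem:02022020a1}, and sandwich the integrand between $(1-\lambda)\wh a_h(y)$ on $[y-\lambda\wh a_h(y),y+\lambda\wh a_h(y)]$ and $\wh a_h(y)$ on the whole interval. Your extra care with the $\lambda=1$ endpoint and the remark on the truncated boundary regime are both accurate but not needed beyond what the paper records.
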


\begin{proof}
Fix $y\in I^\circ$.
Choose a sufficiently small $h_0\in(0,\ol h]$ such that $y\in(l_{h_0},r_{h_0})$.
Remark~\ref{rem:02022020a1} implies that, for all $h\in(0,h_0]$, we have
$$
\int_{(y-\wh a_h(y),y+\wh a_h(y))}(\wh a_h(y)-|u-y|)\,m(du)=2h.
$$
For any $\lambda\in[0,1]$, we have
\begin{align*}
(1-\lambda)\wh a_h(y)m([y-\lambda\wh a_h(y),y+\lambda\wh a_h(y)])
&\le\int_{(y-\wh a_h(y),y+\wh a_h(y))}(\wh a_h(y)-|u-y|)\,m(du)\\[1mm]
&\le\wh a_h(y)m((y-\wh a_h(y),y+\wh a_h(y))).
\end{align*}
This implies both claims.
\end{proof}

We will extensively use the following terminology and notations.
Let $z\in\bbR$, $\eps>0$ and
$f,g\colon(z-\eps,z+\eps)\setminus\{z\}\to\bbR$
be two real functions defined in a deleted neighborhood of $z$.
We say that $f$ and $g$ are of the \emph{same order},
as $x\to z$, and write
\begin{equation}\label{eq:07022020a1}
f(x)\sameorder g(x),\quad x\to z,
\end{equation}
if $\limsup_{x\to z}\left|\frac{f(x)}{g(x)}\right|<\infty$
and $\liminf_{x\to z}\left|\frac{f(x)}{g(x)}\right|>0$
(with the convention $\frac00:=1$).
We also use the same notation~\eqref{eq:07022020a1}
in the case when $f$ and $g$ are only defined
in a one-sided neighborhood of $z$,
i.e., $(z,z+\eps)$ or $(z-\eps,z)$.

\begin{remark}\label{rem:27092018a1}
We fix $y\in I^\circ$ and discuss straightforward consequences
of~\eqref{eq:30092018a1} is several specific situations.

\smallskip
(a) If the speed measure has an atom at $y$,
i.e., $m(\{y\})>0$, then we get
$\wh a_h(y)\sameorder h$, $h\to0$.

\smallskip
(b) In the setting of Example~\ref{ex:sde}
with $\eta(x)\sameorder1$, $x\to y$,
we have $\wh a_h(y)\sameorder\sqrt h$, $h\to0$.

\smallskip
(c) More generally, in the setting of Example~\ref{ex:sde}
with $\eta(x)\sameorder|x-y|^\alpha$, $x\to y$,
for some $\alpha\in(-\infty,\frac12)$
(this restriction on $\alpha$ is to ensure~\eqref{eq:27092018a3}),
we obtain
$\wh a_h(y)\sameorder h^{\frac1{2-2\alpha}}$, $h\to0$.

\smallskip
It is worth noting that, by varying $\alpha\in(-\infty,\frac12)$ in~(c),
we can obtain orders $h^\beta$ for all $\beta\in(0,1)$.
\end{remark}

At this point, it is instructive to compare the EMCEL scheme and the Euler scheme.
The latter is defined only in the SDE case. More precisely, in the setting of Example~\ref{ex:sde},
we define the \emph{Euler scale factors}
$a^{Eu}_h(y)=\eta(y)\sqrt h$, $y\in I^\circ$, $h\in(0,\ol h]$
(and, to treat jumping out of the state space,
we extend the functions $a^{Eu}_h$, $h\in(0,\ol h]$, to be zero in $\bbR\setminus I^\circ$).
The (linearly interpolated, weak) Euler scheme $X^{Eu,h}=(X^{Eu,h}_t)_{t\in[0,\infty)}$
is defined through $a^{Eu}_h$ in the same way as the EMCEL scheme $\wh X^h$
is defined through $\wh a_h$ in \eqref{eq:def_X}--\eqref{eq:13112017a1}.
The properties of the Euler scheme are thus encoded in the Euler scale factors $a^{Eu}_h$, $h\in(0,\ol h]$,
and, by~\eqref{eq:27092018a2}, for any $y\in I^\circ$,
we have $a^{Eu}_h(y)\sameorder\sqrt h$, $h\to0$.
This is like what we have for the EMCEL scale factors
in Remark~\ref{rem:27092018a1}~(b)
and different from what we have
in Remark~\ref{rem:27092018a1}~(c).\footnote{We
do not compare the EMCEL and the Euler schemes
in the situation of Remark~\ref{rem:27092018a1}~(a)
because the latter falls out of the SDE case,
and hence the Euler scheme is not defined.}
While the EMCEL scheme always converges
(Proposition~\ref{prop:07022020a2}),
the Euler scheme can fail to converge when $\eta$ is irregular.
In Example~5.4 of \cite{aku-jmaa}
it is proved that the Euler scheme does not converge
(even weakly) in the case
$\eta(x)=\frac1{|x|}1_{\bbR\setminus\{0\}}+1_{\{0\}}(x)$,
$x\in I:=\bbR$.
Contrary to the Euler scheme, for the EMCEL scheme,
in the latter case we have $\wh a_h(0)\sameorder h^{1/4}$, $h\to0$
(Remark~\ref{rem:27092018a1}~(c)).\footnote{In this connection, it is worth mentioning that Theorem~2.1 in \cite{yan} states an equivalent condition (and Theorem~2.2 there provides a sufficient condition) in a setting with possibly discontinuous $\eta$ for the Euler scheme to converge weakly to the diffusion $Y$, provided uniqueness in law holds for the SDE (which we have in Example~\ref{ex:sde} due to the Engelbert-Schmidt conditions) and $\eta$ is locally bounded and has at most linear growth. Example~5.4 of \cite{aku-jmaa} does not fall into the setting of \cite{yan} (and, indeed, the Euler scheme fails to converge) because, in that example, $\eta$ is not locally bounded.}

\smallskip
Next let $y\in I^\circ$ and assume that
$\wh a_h(y)\sameorder h^\beta$, $h\to0$,
for some $\beta>0$.
A natural question is then what is the limit of
$\frac{\wh a_h(y)}{h^\beta}$ as $h\to0$
(and if it exists at all).
The claims in Remark~\ref{rem:27092018a1}
do not say anything about this.
We now provide several more precise statements
of this kind (in particular,
improving the claims in Remark~\ref{rem:27092018a1}).

\begin{corollary}\label{cor:06022020a1}
Let $y\in I^\circ$ and $m(\{y\})>0$. Then
$\lim_{h\to0}\frac{\wh a_h(y)}h=\frac2{m(\{y\})}$.
\end{corollary}

\begin{proof}
This result still follows from Lemma~\ref{prop:27092018a1}.
In more detail, the second inequality in~\eqref{eq:04022020a1}
implies
$$
\liminf_{h\to0}\frac{\wh a_h(y)}h\ge\frac2{m(\{y\})}.
$$
Next, the first inequality in~\eqref{eq:04022020a1} yields
\begin{align*}
\limsup_{h\to0}\frac{\wh a_h(y)}h
&\le
\limsup_{h\to0}
\frac2{\sup_{\lambda\in[0,1]}\left\{(1-\lambda)m([y-\lambda\wh a_h(y),y+\lambda\wh a_h(y)])\right\}}\\
&\le\limsup_{h\to0}\frac2{m(\{y\})}=\frac2{m(\{y\})}.
\end{align*}
This completes the proof.
\end{proof}

Below in Propositions \ref{prop:06022020a2} and~\ref{prop:08022020a1}
we discuss the case, where the speed measure $m$ has
the following structure in a neighborhood of some point $y\in I^\circ$:
there is $\alpha\in(-\infty,\frac12)$ and a non-vanishing Borel function $\varphi$ such that
\begin{equation}\label{eq:07022020a3}
m(dx)=\frac2{\varphi^2(x)}|x-y|^{-2\alpha}\,dx
\quad\text{in some neighborhood of }y.
\end{equation}
On the one hand, this allows to improve the claims in
Remark~\ref{rem:27092018a1}~(b) and~(c).
On the other hand, this allows to complement
the above comparison with the Euler scheme
(see Remark~\ref{rem:08022020a1}).

We first need the following notation.
Let $z\in\bbR$, $\eps>0$ and
$f\colon(z-\eps,z+\eps)\setminus\{z\}\to\bbR$
be a real function defined in a deleted neighborhood of $z$. We set
\begin{align*}
|f|^*(z)&=\limsup_{x\to z}|f(x)|,\\
|f|_*(z)&=\liminf_{x\to z}|f(x)|.
\end{align*}

\begin{propo}\label{prop:06022020a2}
Let $y\in I^\circ$.
Assume that there exist $\alpha\in(-\infty,\frac12)$
and a non-vanishing Borel function $\varphi$ such that
the speed measure $m$ has structure~\eqref{eq:07022020a3}.
Then it holds
\begin{equation}\label{eq:09022020a1}
\begin{split}
[(1-2\alpha)(1-\alpha)]^{\frac1{2-2\alpha}}
|\varphi|_*(y)^{\frac1{1-\alpha}}
&\le
\liminf_{h\to0}
\frac{\wh a_h(y)}{h^{\frac1{2-2\alpha}}}\\[1mm]
&\le
\limsup_{h\to0}
\frac{\wh a_h(y)}{h^{\frac1{2-2\alpha}}}
\le
[(1-2\alpha)(1-\alpha)]^{\frac1{2-2\alpha}}
|\varphi|^*(y)^{\frac1{1-\alpha}}.
\end{split}
\end{equation}
\end{propo}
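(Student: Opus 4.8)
The plan is to reduce the statement to the defining equation for $\wh a_h(y)$ and to one explicit one‑dimensional integral. Fix $y\in I^\circ$ and write $a:=\wh a_h(y)$. By Proposition~\ref{prop:09022020a1} we have $a\to0$ as $h\to0$, so for all sufficiently small $h$ the interval $(y-a,y+a)$ is contained in the neighbourhood of $y$ on which $m$ has the form~\eqref{eq:07022020a3}; moreover, by Remark~\ref{rem:02022020a1} (choose $h_0$ with $y\in(l_{h_0},r_{h_0})$, as in the proof of Lemma~\ref{prop:27092018a1}) the number $a$ is, for $h\le h_0$, the unique positive root of
\[
\int_{(y-a,y+a)}(a-|u-y|)\,m(du)=2h .
\]
Substituting $u=y+t$ and $m(du)=\tfrac{2}{\eta^2(u)}|u-y|^{-2\alpha}\,du$ turns this into
\[
\int_{-a}^{a}(a-|t|)\,\frac{|t|^{-2\alpha}}{\eta^2(y+t)}\,dt=h .
\]

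Next I would sandwich $\eta$. Fix $\eps>0$. By the definition of $|\eta|_*(y)$ and $|\eta|^*(y)$ there is $\delta>0$ such that $(|\eta|_*(y)-\eps)^2\le\eta^2(y+t)\le(|\eta|^*(y)+\eps)^2$ for $0<|t|<\delta$ (the left inequality being vacuous if $|\eta|_*(y)\le\eps$, the right one if $|\eta|^*(y)=\infty$). For $h$ small enough that $a<\delta$, I insert these bounds into the displayed identity and use the elementary computation
\[
\int_{-a}^{a}(a-|t|)\,|t|^{-2\alpha}\,dt
=2\!\int_0^a(a-t)t^{-2\alpha}\,dt
=\frac{a^{2-2\alpha}}{(1-2\alpha)(1-\alpha)} ,
\]
which is finite precisely because $\alpha<\tfrac12$. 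This yields
\[
\frac{a^{2-2\alpha}}{(1-2\alpha)(1-\alpha)\,(|\eta|^*(y)+\eps)^2}
\;\le\; h \;\le\;
\frac{a^{2-2\alpha}}{(1-2\alpha)(1-\alpha)\,(|\eta|_*(y)-\eps)^2}.
\]

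Finally I would solve these inequalities for $a$. Using $\tfrac{2}{2-2\alpha}=\tfrac{1}{1-\alpha}$, the left inequality gives
\[
\wh a_h(y)\le\bigl[(1-2\alpha)(1-\alpha)\bigr]^{\frac{1}{2-2\alpha}}(|\eta|^*(y)+\eps)^{\frac{1}{1-\alpha}}\,h^{\frac{1}{2-2\alpha}},
\]
so, taking $\limsup_{h\to0}$ and then $\eps\downarrow0$, we obtain the upper bound in~\eqref{eq:09022020a1}; symmetrically, the right inequality gives the lower bound. The degenerate endpoint cases $|\eta|_*(y)=0$ and $|\eta|^*(y)=\infty$ need no argument, since the corresponding bound in~\eqref{eq:09022020a1} is then trivially true (recall $\wh a_h(y)>0$). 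I do not expect a genuine obstacle here: the only points requiring a little care are ensuring that the neighbourhood in~\eqref{eq:07022020a3} is eventually entered (which is exactly Proposition~\ref{prop:09022020a1}) and the bookkeeping of the two limits $h\to0$ and $\eps\downarrow0$ together with the degenerate cases; everything else is the explicit integral computation above.
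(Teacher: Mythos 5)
Your proof is correct and follows essentially the same route as the paper's: reduce to the defining integral identity for $\wh a_h(y)$ (valid for small $h$ via Remark~\ref{rem:02022020a1} and Proposition~\ref{prop:09022020a1}), sandwich $\eta^2$ near $y$, evaluate $\int_{-a}^{a}(a-|t|)|t|^{-2\alpha}\,dt=a^{2-2\alpha}/\bigl((1-2\alpha)(1-\alpha)\bigr)$ explicitly, and solve for $\wh a_h(y)$ before letting $h\to0$ and then $\eps\to0$; the paper writes out only the $\limsup$ bound and declares the $\liminf$ bound symmetric. The single microscopic point to tidy is the case $|\eta|_*(y)=\infty$, where the lower sandwich $(|\eta|_*(y)-\eps)^2\le\eta^2(y+t)$ should be replaced by $M^2\le\eta^2(y+t)$ for an arbitrary finite $M$ followed by $M\to\infty$ — a purely cosmetic adjustment.
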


\begin{proof}
We prove only the last inequality in~\eqref{eq:09022020a1}.
The first one is proved in a similar way.
If $|\varphi|^*(y)=\infty$, then there is nothing to prove.
Below we assume that $|\varphi|^*(y)<\infty$.
Choose $h_0\in(0,\ol h]$ such that
$y\in(l_{h_0},r_{h_0})$ and
$(y-\wh a_{h_0}(y),y+\wh a_{h_0}(y))$
is included in the neighborhood, where~\eqref{eq:07022020a3} holds.
Remark~\ref{rem:02022020a1} and Proposition~\ref{prop:07022020a1}
imply that, for all $h\in(0,h_0]$, we have
\begin{equation}\label{eq:09022020a2}
\int_{y-\wh a_h(y)}^{y+\wh a_h(y)}
(\wh a_h(y)-|u-y|)\,|u-y|^{-2\alpha}\frac2{\varphi^2(u)}\,du=2h.
\end{equation}
Consider an arbitrary $\eps>0$.
Then choose $h_1\in(0,h_0]$ such that
\begin{equation}\label{eq:09022020a3}
|\varphi(u)|\le(1+\eps)|\varphi|^*(y)\quad \text{for all }
u\in(y-\wh a_{h_1}(y),y+\wh a_{h_1}(y))
\end{equation}
(this is possible due to Proposition~\ref{prop:09022020a1}).
It follows from
\eqref{eq:09022020a2} and~\eqref{eq:09022020a3} that
$$
\frac2{(1+\eps)^2|\varphi|^*(y)^2}
\int_{y-\wh a_h(y)}^{y+\wh a_h(y)}
(\wh a_h(y)-|u-y|)\,|u-y|^{-2\alpha}\,du\le 2h
\quad 
\text{for all }
h\in(0,h_1].
$$
The integral is explicitly computable, and we get
$$
\frac2{(1+\eps)^2|\varphi|^*(y)^2}\,
\frac{\wh a_h(y)^{2-2\alpha}}{(1-2\alpha)(1-\alpha)}
\le 2h
\quad 
\text{for all }
h\in(0,h_1],
$$
hence
$$
\limsup_{h\to0}
\frac{\wh a_h(y)}{h^{\frac1{2-2\alpha}}}
\le
[(1-2\alpha)(1-\alpha)]^{\frac1{2-2\alpha}}
\left[|\varphi|^*(y)(1+\eps)\right]^{\frac1{1-\alpha}}.
$$
As $\eps>0$ is arbitrary, we obtain the result.
\end{proof}

The next result provides a sufficient condition for the $\liminf$ and $\limsup$ in Proposition~\ref{prop:06022020a2} to coincide.

\begin{propo}\label{prop:08022020a1}
Let $y\in I^\circ$.
Assume that there exist $\alpha\in(-\infty,\frac12)$
and a non-vanishing Borel function $\varphi$ such that
the speed measure $m$ has structure~\eqref{eq:07022020a3}.
Further assume that
\begin{equation}\label{eq:09022020a6}
\lim_{x\nearrow y}|\varphi(x)|=|\varphi|(y-)\in(0,\infty]
\quad\text{and}\quad
\lim_{x\searrow y}|\varphi(x)|=|\varphi|(y+)\in(0,\infty].
\end{equation}
Then
$$
\lim_{h\to0}\frac{\wh a_h(y)}{h^{\frac1{2-2\alpha}}}
=\left[\frac{(1-2\alpha)(2-2\alpha)}{\frac1{|\varphi|^2(y-)}+\frac1{|\varphi|^2(y+)}}\right]^{\frac1{2-2\alpha}}.
$$
\end{propo}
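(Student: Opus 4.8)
The plan is to read off the asymptotics directly from the integral identity that characterises $\wh a_h(y)$, by rescaling the variable of integration so that it runs over the \emph{fixed} interval $[0,1]$, and then to let $h\to0$ inside the integral via dominated convergence. The computation is essentially the one already used in the proof of Proposition~\ref{prop:06022020a2}; the new ingredient is that the one-sided limit hypothesis~\eqref{eq:09022020a6} makes the $\liminf$ and $\limsup$ there collapse into a genuine limit.

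First I would dispose of the degenerate case: if $|\eta|(y-)=|\eta|(y+)=\infty$, then $|\eta|_*(y)=\infty$ and the asserted value is $\infty$, so the claim follows at once from Proposition~\ref{prop:06022020a2}. From now on $\tfrac1{|\eta|^2(y-)}+\tfrac1{|\eta|^2(y+)}\in(0,\infty)$ (with the convention $1/\infty=0$). Next, exactly as in the proof of Proposition~\ref{prop:06022020a2}, I would fix $h_0\in(0,\ol h]$ small enough that $y\in(l_{h_0},r_{h_0})$ and that $(y-\wh a_{h_0}(y),y+\wh a_{h_0}(y))$ lies inside the neighbourhood where~\eqref{eq:07022020a3} holds, so that Remark~\ref{rem:02022020a1} together with Proposition~\ref{prop:07022020a1} yields identity~\eqref{eq:09022020a2} for all $h\in(0,h_0]$. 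Writing $a:=\wh a_h(y)$, splitting the integral in~\eqref{eq:09022020a2} at $u=y$ and substituting $u=y\pm av$ on the two halves turns it into
\begin{equation*}
a^{2-2\alpha}\int_0^1(1-v)\,v^{-2\alpha}\left[\frac1{\eta^2(y+av)}+\frac1{\eta^2(y-av)}\right]dv=h,
\end{equation*}
that is,
\begin{equation*}
\frac{\wh a_h(y)^{2-2\alpha}}{h}=\left(\int_0^1(1-v)\,v^{-2\alpha}\left[\frac1{\eta^2(y+av)}+\frac1{\eta^2(y-av)}\right]dv\right)^{-1}.
\end{equation*}

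Now I would send $h\to0$. By Proposition~\ref{prop:09022020a1} we have $a=\wh a_h(y)\to0$, so for each fixed $v\in(0,1]$ the integrand converges, by~\eqref{eq:09022020a6}, to $(1-v)v^{-2\alpha}\bigl[|\eta|^{-2}(y+)+|\eta|^{-2}(y-)\bigr]$. To justify passing the limit inside the integral I would note that~\eqref{eq:09022020a6} forces $\liminf_{x\to y}|\eta(x)|>0$, hence $1/\eta^2$ is bounded by some finite constant $C$ on a small deleted neighbourhood of $y$; shrinking $h_0$ further if necessary so that $(y-\wh a_{h_0}(y),y+\wh a_{h_0}(y))$ sits inside that neighbourhood, the integrand is dominated by $2C(1-v)v^{-2\alpha}$ for every $h\in(0,h_0]$ (using the monotonicity of $h\mapsto\wh a_h(y)$ from Proposition~\ref{prop:07022020a1}), and $v\mapsto(1-v)v^{-2\alpha}$ is integrable on $(0,1]$ because $\alpha<\tfrac12$. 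Dominated convergence, together with the elementary evaluation
\begin{equation*}
\int_0^1(1-v)\,v^{-2\alpha}\,dv=\frac1{1-2\alpha}-\frac1{2-2\alpha}=\frac1{(1-2\alpha)(2-2\alpha)},
\end{equation*}
then gives $\lim_{h\to0}\wh a_h(y)^{2-2\alpha}/h=\dfrac{(1-2\alpha)(2-2\alpha)}{\,|\eta|^{-2}(y-)+|\eta|^{-2}(y+)\,}$, and taking $(2-2\alpha)$-th roots is precisely the claim.

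The only genuinely delicate point is the domination step: one must make sure that the bound $C$ on $1/\eta^2$ near $y$ and the inclusion of \emph{all} intervals $(y-\wh a_h(y),y+\wh a_h(y))$, $h\le h_0$, in the region where that bound is valid can be arranged simultaneously. This is exactly where~\eqref{eq:09022020a6} (rather than the weaker hypothesis of Proposition~\ref{prop:06022020a2}) is used, in combination with $\wh a_h(y)\to0$ from Proposition~\ref{prop:09022020a1} and the monotonicity from Proposition~\ref{prop:07022020a1}; everything else is a routine substitution and computation.
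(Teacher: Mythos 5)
Your proof is correct and follows essentially the same route as the paper's: both extract the asymptotics from the defining integral identity~\eqref{eq:09022020a2} by exploiting the one-sided limits~\eqref{eq:09022020a6}, and your dispatch of the degenerate case $|\eta|(y-)=|\eta|(y+)=\infty$ via Proposition~\ref{prop:06022020a2} matches the paper's use of its lower bound~\eqref{eq:09022020a5}. The only difference is in execution, not substance: you rescale to the fixed interval $[0,1]$ and invoke dominated convergence (with a correctly justified dominating function using $\liminf_{x\to y}|\eta(x)|>0$ and the monotonicity of $h\mapsto\wh a_h(y)$), whereas the paper sandwiches the integral between explicit $\eps$-dependent bounds chosen asymmetrically to accommodate infinite one-sided limits.
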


\begin{proof}
A formal proof is obtained along the lines of the proof
of Proposition~\ref{prop:06022020a2}.
We only show some technical steps that need to be elaborated differently.
For notational convenience we set $c_1=|\varphi|(y-)$ and $c_2=|\varphi|(y+)$.
Consider an arbitrary $\eps\in(0,1)$.
By~\eqref{eq:09022020a6}, for sufficiently small $a>0$, we have
$$
\int_y^{y+a}
(a-|u-y|)\,|u-y|^{-2\alpha}\frac2{\varphi^2(u)}\,du
\in[B_{low},B_{up}],
$$
where
\begin{align*}
B_{low}&=\frac{a^{2-2\alpha}}{(1-2\alpha)(2-2\alpha)}
\,\frac2{c_2^2}\,(1-\eps),\\[1mm]
B_{up}&=\frac{a^{2-2\alpha}}{(1-2\alpha)(2-2\alpha)}
\left(\frac2{c_2^2}+\eps\right).
\end{align*}
It is worth noting that $\eps$ appears in the lower and upper bounds
in a non-symmetric way because we need nonnegative bounds
and we need to include the possibility $c_2=\infty$.
With similar bounds for the integral from $y-a$ to $y$, we obtain,
for sufficiently small $h>0$,
\begin{equation}\label{eq:09022020a4}
\frac{\wh a_h(y)^{2-2\alpha}}{(1-2\alpha)(2-2\alpha)}
\left[\frac2{c_1^2}+\frac2{c_2^2}\right](1-\eps)
\le2h\le
\frac{\wh a_h(y)^{2-2\alpha}}{(1-2\alpha)(2-2\alpha)}
\left[\frac2{c_1^2}+\frac2{c_2^2}+2\eps\right].
\end{equation}
On the one hand, \eqref{eq:09022020a4} yields
\begin{equation}\label{eq:09022020a5}
\liminf_{h\to0}
\frac{\wh a_h(y)}{h^{\frac1{2-2\alpha}}}
\ge
\left[
\frac{(1-2\alpha)(2-2\alpha)}{\frac1{c_1^2}+\frac1{c_2^2}+\eps}
\right]^{\frac1{2-2\alpha}},
\end{equation}
which already implies the result in the case $c_1=c_2=\infty$,
as $\eps\in(0,1)$ is arbitrary.
In the case $\min\{c_1,c_2\}<\infty$,
we also get from~\eqref{eq:09022020a4}
$$
\limsup_{h\to0}
\frac{\wh a_h(y)}{h^{\frac1{2-2\alpha}}}
\le
\left[\frac{(1-2\alpha)(2-2\alpha)}{\frac1{c_1^2}+\frac1{c_2^2}}\right]^{\frac1{2-2\alpha}}
\left(\frac1{1-\eps}\right)^{\frac1{2-2\alpha}},
$$
which, together with~\eqref{eq:09022020a5}, concludes the proof.
\end{proof}

\begin{remark}\label{rem:08022020a1}
Let $y\in I^\circ$.
In the setting of Example~\ref{ex:sde},
Proposition~\ref{prop:06022020a2} implies
\begin{equation}\label{eq:09022020a7}
|\eta|_*(y)
\le\liminf_{h\to0}\frac{\wh a_h(y)}{\sqrt h}
\le\limsup_{h\to0}\frac{\wh a_h(y)}{\sqrt h}
\le|\eta|^*(y)
\end{equation}
(because \eqref{eq:07022020a3} holds with $\eta$ in place of $\varphi$ and $\alpha=0$).
In particular, if $\eta$ is continuous at point $y$, then
$$
\lim_{h\to0}\frac{\wh a_h(y)}{\sqrt h}=|\eta(y)|,
$$
which has a clear interpretation that, for small $h>0$,
the EMCEL scheme is close to the Euler one at points,
where $\eta$ is continuous.

Furthermore, if in the setting of Example~\ref{ex:sde} we have~\eqref{eq:09022020a6}
with $\eta$ in place of $\varphi$,
then Proposition~\ref{prop:08022020a1}
improves~\eqref{eq:09022020a7} by establishing
\begin{equation}\label{eq:09022020a8}
\lim_{h\to0}\frac{\wh a_h(y)}{\sqrt h}=
\sqrt{\frac2{\frac1{|\eta|^2(y-)}+\frac1{|\eta|^2(y+)}}},
\end{equation}
i.e., the limit $\lim_{h\to0}\frac{\wh a_h(y)}{\sqrt h}$ is equal to
the power mean with exponent $-2$ of the left and the right limits of $|\eta|$ at~$y$.

To illustrate this observation consider the SDE $dY_t = \sigma(Y_t) dW_t$ with periodic diffusion coefficient 
$$
\sigma(y) = \begin{cases}
1 & \text{if }y \in \cup_{k \in \IZ} [2k, 2k+1), \\
2 & \text{if }y \in \cup_{k \in \IZ} [2k+1, 2k+2).
\end{cases}
$$
The SDE describes a diffusion in a medium that is periodically compounded with two types of layers.
Equation~\eqref{eq:09022020a8} implies that
$$
\lim_{h\to0}\frac{\wh a_h(y)}{\sqrt h}=
\begin{cases}
1&\text{if }y \in \cup_{k \in \IZ} (2k, 2k+1),\\
\sqrt{8/5}&\text{if }y \in \IZ,\\
2&\text{if }y \in \cup_{k \in \IZ} (2k+1, 2k+2)
\end{cases}
$$
with no need to actually compute the EMCEL scale factors. We remark that the scale factor of $\sqrt{8/5}$ at the boundaries of the layers also appears in the diffusion's homogenization limit: the distribution of the solution $Y^\eps$ of the SDE $dY_t = \sigma\left( \frac{Y_t}{\eps}\right) dW_t$, $Y_0 = 0$, converges, as $\eps \downarrow 0$, to the distribution of a BM scaled by $\sqrt{8/5}$ (see \cite{BLP1978} for an introduction into the homogenization theory for periodic SDEs). 

\end{remark}

\subsection{Dependence on the state}\label{sec:cat2}
Next we discuss properties from category~\eqref{it:31012020a2} of the introduction.
Specifically, here we examine how the EMCEL scale factors depend on the state variable, i.e.,
we study the functions $I\ni y\mapsto\wh a_h(y)$,
and observe that the results translate into some good stability features of the scheme.

\begin{theo}[Comparison principle]\label{prop:comp_princ}
For every $h\in(0,\ol h]$ and $z\in \{-1,1\}$,
the mapping $y\mapsto y+\wh a_h(y)z$ is nondecreasing on~$I$.
\end{theo}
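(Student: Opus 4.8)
The plan is to show that for $y_1 < y_2$ in $I$ one has $y_1 + \wh a_h(y_1) z \le y_2 + \wh a_h(y_2) z$ for both $z = 1$ and $z = -1$; equivalently, writing $u(y) = y + \wh a_h(y)$ and $\ell(y) = y - \wh a_h(y)$, that both $u$ and $\ell$ are nondecreasing on $I$. The key is the analytic characterization of the scale factor from Remark~\ref{rem:02022020a1}: on the ``interior'' range $(l_h, r_h)$ the value $\wh a_h(y)$ is the unique positive root of $\frac12\int_{(y-a,y+a)}(a-|u-y|)\,m(du) = h$, while near an accessible boundary $\wh a_h$ is pinned by $\ell(y) = l$ (resp.\ $u(y) = r$) via~\eqref{eq:22022019a4}. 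The strategy is to establish the monotonicity of $u$ directly from this defining equation and then invoke a symmetry argument for $\ell$.

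First I would reduce to $u$: the map $y \mapsto -y$ sends the general diffusion $Y$ (in natural scale) to another general diffusion whose speed measure is the reflection of $m$, and under this reflection $\wh a_h$ is unchanged while $u$ and $\ell$ swap roles; hence it suffices to prove $u$ is nondecreasing. Second, I would handle the three regimes. On $(l_h, r_h)$, suppose for contradiction $y_1 < y_2$ but $u(y_1) > u(y_2)$, i.e.\ $y_1 + \wh a_h(y_1) > y_2 + \wh a_h(y_2)$. Writing $\varphi(y,a) := \frac12\int_I (a - |v - y|)^+\,m(dv)$, both $(y_1, \wh a_h(y_1))$ and $(y_2, \wh a_h(y_2))$ satisfy $\varphi(y_i, \wh a_h(y_i)) = h$. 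The crucial monotonicity fact I would extract is: if $y_1 < y_2$ and $y_1 + a_1 \ge y_2 + a_2$ (so the right endpoints are ordered the ``wrong'' way) while also — because $u(y_1) > u(y_2)$ and I can check the left endpoints — $y_1 - a_1 \le y_2 - a_2$, then the interval $(y_1 - a_1, y_1 + a_1)$ contains $(y_2 - a_2, y_2 + a_2)$ and moreover the ``tent function'' $v \mapsto (a_1 - |v - y_1|)^+$ dominates $v \mapsto (a_2 - |v - y_2|)^+$ pointwise; since $m$ is a positive measure on $I^\circ$ with full support on intervals, this forces $\varphi(y_1, a_1) \ge \varphi(y_2, a_2)$ with equality only in a degenerate situation that contradicts strict ordering. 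The point is that I must verify that $u(y_1) > u(y_2)$ together with $y_1 < y_2$ actually does force the nesting of the two symmetric intervals about their respective centers — and this is exactly where I expect the real work, because a priori one only knows $u(y_1) > u(y_2)$, not the behavior of the left endpoints $\ell(y_i)$; one may need to argue by a ``sliding'' comparison, comparing $\wh a_h(y_1)$ at center $y_1$ with a hypothetical interval of the same half-width $\wh a_h(y_1)$ centered at $y_2$, using that $\varphi(\cdot, a)$ shifts continuously in the center.

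Third, I would treat the boundary regimes and the interfaces. For $y \in (l, l_h]$ we have $u(y) = y - \ell(y) + 2\ell(y)$... more directly, $u(y) = 2y - l$ by~\eqref{eq:22022019a4}, which is manifestly (strictly) increasing in $y$; symmetrically for $y \in [r_h, r)$ we have $u(y) = r$, constant. Then I would check continuity of $\wh a_h$, hence of $u$, across the transition points $l_h$ and $r_h$ — this follows from the continuity built into Remark~\ref{rem:02022020a1} (the defining equation~\eqref{eq:08112019a2} has $\varphi$ continuous in $a$, and at $y = l_h$ the root equals the boundary-pinned value) — so that the three monotone pieces glue into a globally nondecreasing function on $I$, with the endpoint values $\wh a_h(l) = \wh a_h(r) = 0$ handled trivially. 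The main obstacle, to repeat, is the interior comparison step: turning the single scalar equation $\varphi(y_i, \wh a_h(y_i)) = h$ into a statement about the ordering of the endpoints $y_i + \wh a_h(y_i)$, which requires exploiting not just monotonicity of $\varphi$ in $a$ but a genuine geometric comparison of the two tent-shaped integrands against the common measure $m$.
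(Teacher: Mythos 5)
Your overall architecture matches the paper's: the defining equation from Remark~\ref{rem:02022020a1}, the explicit monotone pieces on $(l,l_h]$ and $[r_h,r)$, and, above all, the key geometric lemma that the ``tent'' integrand $v\mapsto(a-|v-y|)^+$ is the distance from $v$ to the complement of $(y-a,y+a)$, so that strict nesting of two such intervals forces a strict inequality between the corresponding values of $G(y,a)=\int_{(y-a,y+a)}(a-|u-y|)\,m(du)$ (using~\eqref{eq:06072018a1}, i.e.\ that $m$ charges every nondegenerate subinterval of $I^\circ$). However, there is a genuine gap exactly at the point you flag as ``the real work'': you never show that $y_1<y_2$ together with $y_1+a_1>y_2+a_2$ actually produces a nested pair of intervals, and the ``sliding'' comparison you propose as a fallback does not work, because $\varphi(y,a)$ is \emph{not} monotone in the center $y$ for fixed $a$ (shifting the tent can move mass of $m$ into or out of its support in either direction), so comparing $\wh a_h(y_1)$ against a hypothetical interval of the same half-width centered at $y_2$ gives no usable inequality.

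The missing step is closed by an elementary dichotomy on the left endpoints, and this is precisely how the paper argues. Suppose $y_1<y_2$ and $y_1+a_1>y_2+a_2$. If in addition $y_1-a_1\le y_2-a_2$, then $(y_1-a_1,y_1+a_1)$ strictly contains $(y_2-a_2,y_2+a_2)$, whence $G(y_1,a_1)>G(y_2,a_2)$; this contradicts $G(y_1,a_1)=2h=G(y_2,a_2)$ in the interior regime, and in the boundary-pinned regimes it contradicts the one-sided relations $G(y_1,a_1)\le 2h$ (which always holds by~\eqref{eq:08112019a1}) versus $G(y_2,a_2)=2h$. If instead $y_1-a_1>y_2-a_2$, then adding this to $y_1+a_1>y_2+a_2$ gives $2y_1>2y_2$, contradicting $y_1<y_2$ outright --- no measure-theoretic input needed. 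The same dichotomy (with the roles of the two endpoint inequalities exchanged) handles $y\mapsto y-\wh a_h(y)$, so your reflection reduction, while correct, is not even necessary. With this two-line case split inserted, your proof becomes essentially the paper's proof; the paper simply organizes the boundary configurations as four cases according to the accessibility of $l$ and $r$ rather than by gluing the three monotone pieces across $l_h$ and $r_h$, but that difference is cosmetic.
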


Let us discuss the meaning of this result.
If at some time $kh$ the EMCEL$(h)$ approximation
is in position $y\in I$,
then it will be either in $y+\wh a_h(y)$
or in $y-\wh a_h(y)$ at time $(k+1)h$.
Consider two points $y_1<y_2$ in $I$.
Theorem~\ref{prop:comp_princ} suggests to compare
two situations, where at time $kh$ we are in $y_1$
(the 1st situation) or in $y_2$ (the 2nd one),
and asserts that, if we use the same realized
$\xi_{k+1}$ in both situations (recall~\eqref{eq:def_X}),
then, in the 1st situation, we end up in the smaller position
at time $(k+1)h$ than in the 2nd situation.
Notice that this property need not hold for the Euler scheme\footnote{When we speak about the Euler scheme, we restrict ourselves to the SDE case.},
which may result in a kind of ``diverging oscillations''
in the Euler scheme.

We quote Figure~\ref{fig cosh} from \cite{aku-jmaa}\footnote{We
remark that Theorem~\ref{prop:comp_princ},
which holds for \emph{all} possible speed measures $m$,
is a much stronger result than the comparison principle
in \cite{aku-jmaa} and that the proof in \cite{aku-jmaa}
is based on the implicit function theorem,
which cannot be used for all \emph{all} possible
speed measures $m$,
as the involved functions, in particular, $\wh a_h$, are,
in general, not in $C^1$
(the latter claim follows from Theorem~\ref{theo:ode} below).
Thus, the the proof of Theorem~\ref{prop:comp_princ}
uses ideas that are not present in \cite{aku-jmaa}.}
as an example of what can happen
when the SDE coefficients are of superlinear growth
(also see Theorem~2.1 in \cite{HJK}
for a related quantitative statement regarding the Euler scheme).
More precisely, Figure~\ref{fig cosh} considers the numerical performance of the weak Euler scheme\footnote{See the text preceding Corollary~\ref{cor:06022020a1} for the definition of the weak Euler scheme.} and the EMCEL scheme for the SDE $dY_t=\cosh(Y_t)\,dW_t$. To understand why for fixed time step $h>0$ the weak Euler scheme $X^{Eu,h}$ exhibits ``diverging oscillations'' in this example assume that at some time $kh$ the scheme has reached a point\footnote{The same reasoning applies to the case $y<0$ with straightforward modifications.} $y>0$ large enough so that $2y/\cosh(y)<\sqrt{h}$. If we next have an upward jump (i.e., $\xi_{k+1}=1$) then clearly $X^{Eu,h}_{(k+1)h}>y$. In the other case where we next have a downward jump (i.e., $\xi_{k+1}=-1$) the condition $2y/\cosh(y)<\sqrt{h}$ entails that $X^{Eu,h}_{(k+1)h}<-y$. So with probability one we have that the absolute value of the Euler scheme at time $(k+1)h$ is bigger than its absolute value at time $kh$. In fact, the exponential growth of $\cosh$ entails that almost surely $\{k,k+1,\ldots\}\ni n \mapsto |X^{Eu,h}_{nh}|\in \R$ increases super-exponentially, which ultimately leads to the ``diverging oscillations''. Let us next justify why Theorem~\ref{prop:comp_princ} ensures that such explosions cannot happen for the EMCEL scheme. To this end suppose again that at some time $kh$ the scheme has reached a high value $y>0$. If there is a streak of further subsequent upward jumps, then Theorem~\ref{prop:comp_princ} ensures that the size of each jump $\wh a_h(\wh X_{lh})$, $l\ge k$, is at most linear in $\wh X_{lh}$ (cf.\ also Corollary~\ref{prop:26092018a1} below). Eventually, at some time $nh$ there will be a downward jump. In this case Theorem~\ref{prop:comp_princ} ensures that $\wh X_{(n+1)h} =\sup_{y\le \wh X_{nh} }(y-\wh a_h(y)) \ge -\wh a_h(0)$ so that the EMCEL scheme opposed to the Euler scheme does not overjump a large region around $0$ but rather jumps back into a ``stable'' region around $0$ (in the setting of the bottom figure of Figure~\ref{fig cosh} a downward jump from any level $y\ge 3$ leads to a value close to $1$ after the jump).

\begin{figure}[!htb]
\centering
\includegraphics[width=0.7\textwidth]{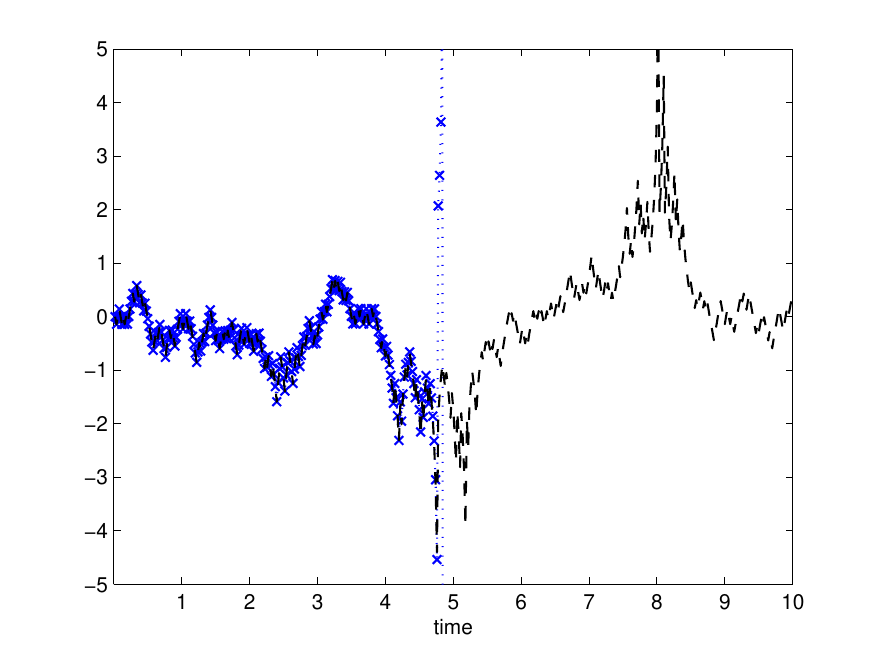}\\
\includegraphics[width=0.7\textwidth]{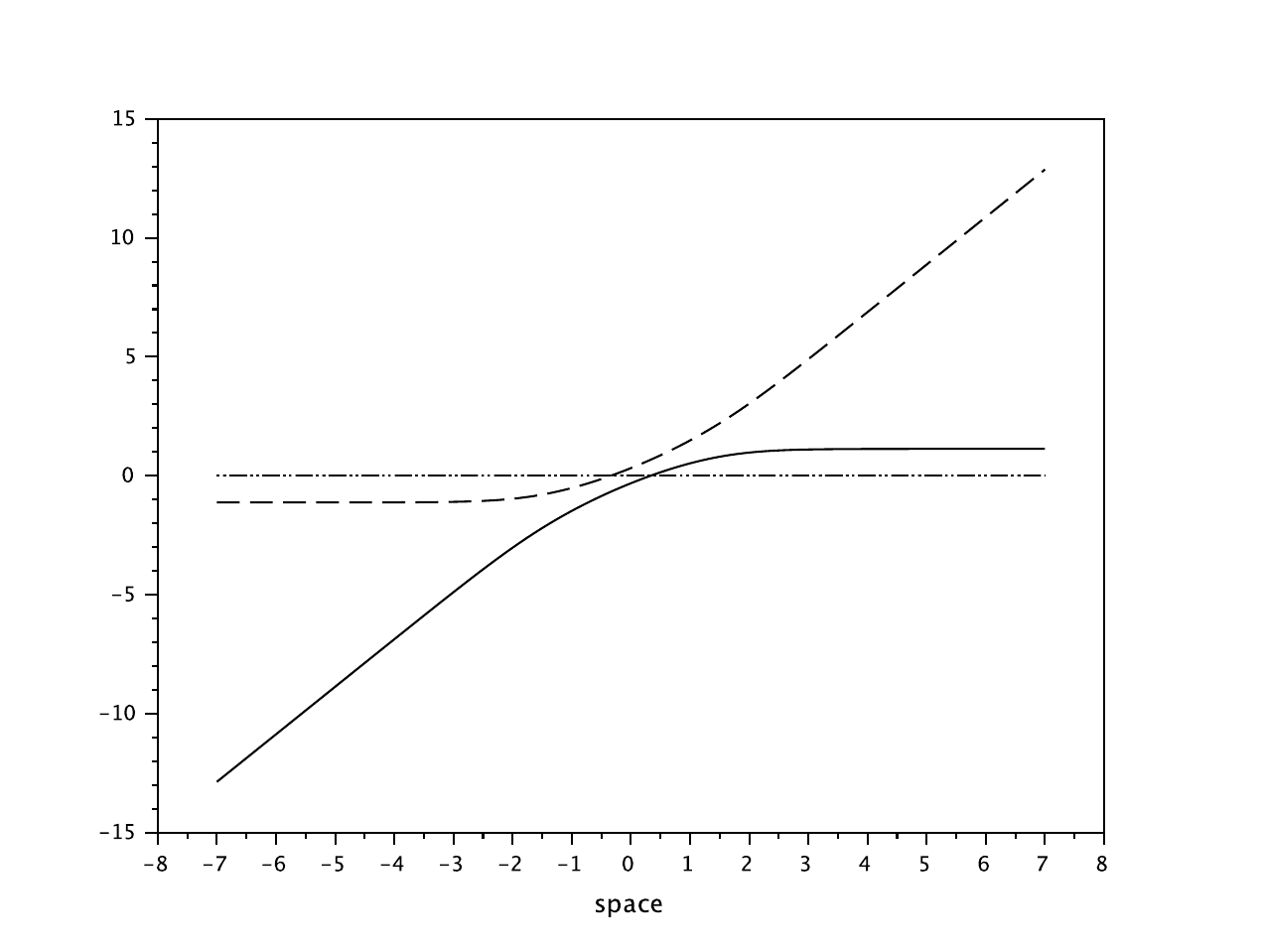}

{\caption{\footnotesize  The figure on top shows two realizations of discrete approximations to the SDE $dY_t = \cosh(Y_t)\,dW_t$ with $Y_0 = 0$. The dashed line depicts the realization based on the EMCEL scheme. The crosses show the realization obtained with the Euler scheme. Both use the same realized increments $(\xi_k)$. Notice that the approximations are nearly identical until shortly before time~5. The large absolute values entail that the Euler approximation explodes and eventually aborts, whereas the dashed approximation easily continues.
In the bottom figure the solid and dashed lines
are the graphs of the functions
$y\mapsto y-\wh a_h(y)$
and
$y\mapsto y+\wh a_h(y)$.
The dash-dotted line indicates level zero.
As outlined above the monotonicity 
of both functions implies that divergent oscillations
are impossible in the EMCEL scheme.
}\label{fig cosh}}
\end{figure}

It is instructive to discuss relations and differences between the comparison principle of Theorem~\ref{prop:comp_princ} and comparison theorems for solutions of SDEs
(see, e.g., Theorem~1.4 in \cite{LeGall1984} for It\^o SDEs and Theorem~4.2 in \cite{BassChen2001} for Stratonovich ones):
\begin{itemize}
\item
Comparison theorems for SDEs are pathwise results, and they apply in the situations when the SDE has the pathwise uniqueness property.
And this makes perfect sense, as if an SDE that has a solution does not satisfy pathwise uniqueness, then one can find two different solutions to it with the same driving Brownian motion.
That is, there are no such pathwise comparison results beyond the case of pathwise uniqueness.
It is worth noting that pathwise uniqueness can fail even in the situation of Example~\ref{ex:sde} even with a continuous~$\eta$ (see \cite{Barlow1982}).
\item
The comparison principle of Theorem~\ref{prop:comp_princ} is a pathwise property of the EMCEL scheme only,
which, as discussed above, translates into a good stability feature of the scheme.
However, it does not imply any pathwise comparison result in the spirit of Theorem~1.4 in \cite{LeGall1984},
as the EMCEL scheme approximates general diffusions only in the weak sense.
On the other hand, Theorem~\ref{prop:comp_princ} applies to the EMCEL approximation of \emph{every} general diffusion $Y$
and thus goes far beyond the SDE case under pathwise uniqueness
(e.g., $Y$ can be a solution to an SDE from \cite{Barlow1982}, $Y$ can have sticky features, etc.).
\end{itemize}

\begin{proof}[Proof of Theorem~\protect\ref{prop:comp_princ}]
For $y\in I^\circ$ and $a>0$ such that $y\pm a\in I$,
we use the notation
\begin{equation}\label{eq:26092018a1}
G(y,a)=\int_{(y-a,y+a)} (a-|u-y|)\,m(du).
\end{equation}
We fix $h\in(0,\ol h]$ and elements $y_1<y_2$ of $I$.
Define $a_i=\wh a_h(y_i)$, $i=1,2$.
We need to show that
\begin{equation}\label{eq:26092018a3}
y_1+a_1\le y_2+a_2
\quad\text{and}\quad
y_1-a_1\le y_2-a_2.
\end{equation}
This is clear whenever $y_1\in I\setminus I^\circ$
or $y_2\in I\setminus I^\circ$
(recall that, by construction,
$\wh a_h(l)=\wh a_h(r)=0$ and, for all $y\in I^\circ$,
we have $y\pm\wh a_h(y)\in[l,r]$, see~\eqref{eq:08112019a3}).
Below we, therefore,
assume $y_1,y_2\in I^\circ$ and consider four cases.

\smallskip
1. Let both endpoints $l$ and $r$ be inaccessible.
Then we have
\begin{equation}\label{eq:26092018a2}
G(y_1,a_1)=2h=G(y_2,a_2).
\end{equation}
The expression for $G(y,a)$ on the right-hand side
of~\eqref{eq:26092018a1}
together with~\eqref{eq:26092018a2} imply
that neither of the intervals
$(y_1-a_1,y_1+a_1)$
and
$(y_2-a_2,y_2+a_2)$
contains the other.
This yields~\eqref{eq:26092018a3}.

\smallskip
2. Let $l$ be accessible and $r$ inaccessible.
We first show the second statement in~\eqref{eq:26092018a3}.
If $y_1-a_1=l$, then the statement is clear,
as, by construction, for all $y\in I^\circ$,
it holds that $y-\wh a_h(y)\ge l$.
If $y_1-a_1>l$, then we have
\begin{equation}\label{eq:26092018a4}
G(y_1,a_1)=2h\ge G(y_2,a_2).
\end{equation}
If we now assume that
$y_1-a_1>y_2-a_2$,
then the interval
$(y_2-a_2,y_2+a_2)$
strictly contains the interval
$(y_1-a_1,y_1+a_1)$.
Notice that the integrand in~\eqref{eq:26092018a1} is strictly positive,
the integrand corresponding to the bigger interval $(y_2-a_2,y_2+a_2)$
dominates the one corresponding to the smaller interval $(y_1-a_1,y_1+a_1)$ on that interval
and that $m$ has full support by~\eqref{eq:06072018a1}.
This 
implies $G(y_2,a_2)>G(y_1,a_1)$ and hence contradicts~\eqref{eq:26092018a4}.

Next we show the first statement in~\eqref{eq:26092018a3}.
In the case $y_2-a_2=l$, the statement follows from the fact that
$y_1-a_1\ge l$ and hence $a_1<a_2$.
If $y_2-a_2>l$, then we have
\begin{equation}\label{eq:26092018a5}
G(y_1,a_1)\le2h=G(y_2,a_2).
\end{equation}
Assume that
$y_1+a_1>y_2+a_2$.
Then the interval
$(y_1-a_1,y_1+a_1)$
strictly contains the interval
$(y_2-a_2,y_2+a_2)$,
which, together with~\eqref{eq:26092018a1},
implies
$G(y_1,a_1)>G(y_2,a_2)$
and hence contradicts~\eqref{eq:26092018a5}.

\smallskip
3. The case, where $l$ is inaccessible and $r$ is accessible,
is symmetric to case 2.

\smallskip
4. Let finally both $l$ and $r$ be accessible.
We prove only the first statement in~\eqref{eq:26092018a3},
as the second one is symmetric.
If $y_2+a_2=r$, then the statement follows from
the fact that $y_1+a_1\le r$.
In the case $y_2-a_2=l$, the statement follows from the fact that
$y_1-a_1\ge l$ and hence $a_1<a_2$.
In the remaining case
$y_2\pm a_2\in I^\circ$,
we have
$$
G(y_1,a_1)\le2h=G(y_2,a_2),
$$
and the argument after~\eqref{eq:26092018a5}
yields the desired statement.
\end{proof}

\begin{remark}\label{rem:07042020a1}
Analyzing the proof of Theorem~\ref{prop:comp_princ} in more detail,
we obtain that the following more precise version of the comparison principle holds true:

Let $h\in(0,\ol h]$. Then the mapping
$$
y\mapsto y+\wh a_h(y)\text{ is strictly increasing on }(l,r_h)\text{ and constant on }[r_h,r),
$$
and the mapping
$$
y\mapsto y-\wh a_h(y)\text{ is constant on }(l,l_h]\text{ and strictly increasing on }(l_h,r).
$$
\end{remark}

\begin{corollary}[Smoothing and tempered growth behavior]\label{prop:26092018a1}
For any $h\in(0,\ol h]$, the function $I\ni y\mapsto\wh a_h(y)$
is Lipschitz continuous on $I$ with Lipschitz constant $1$, i.e.,
\begin{equation}
|\wh a_h(y_1)-\wh a_h(y_2)|\le|y_1-y_2|
\quad\text{for all }y_1,y_2\in I\text{ and }h\in(0,\ol h].
\end{equation}
Moreover, there exists a constant $C_0\in[0,\infty)$ such that
\begin{equation}\label{eq:09022020b1}
\wh a_h(y)\le C_0+|y|
\quad\text{for all }y\in I\text{ and }h\in(0,\ol h].
\end{equation}
\end{corollary}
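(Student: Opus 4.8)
The plan is to derive both assertions directly from the comparison principle (Theorem~\ref{prop:comp_princ}). For the Lipschitz claim, I would fix $h\in(0,\ol h]$ and $y_1<y_2$ in $I$. Theorem~\ref{prop:comp_princ} applied with $z=1$ gives $y_1+\wh a_h(y_1)\le y_2+\wh a_h(y_2)$, i.e.\ $\wh a_h(y_1)-\wh a_h(y_2)\le y_2-y_1$, while the same theorem with $z=-1$ gives $y_1-\wh a_h(y_1)\le y_2-\wh a_h(y_2)$, i.e.\ $\wh a_h(y_2)-\wh a_h(y_1)\le y_2-y_1$. Combining the two one-sided bounds yields $|\wh a_h(y_1)-\wh a_h(y_2)|\le|y_1-y_2|$, which is exactly the Lipschitz-$1$ property; note this works uniformly in $h$, since the comparison principle holds for every $h\in(0,\ol h]$.

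For the tempered growth bound \eqref{eq:09022020b1}, I would exploit the fact that the scale factor at a fixed reference point is already controlled uniformly in $h$. The cleanest route: recall $\wh a_h(y)\le\wh a_{\ol h}(y)$ for all $h\in(0,\ol h]$ and $y\in I^\circ$ by the monotonicity in $h$ from Proposition~\ref{prop:07022020a1}, so it suffices to bound $\wh a_{\ol h}$. Applying the Lipschitz bound just proved (with the single parameter $\ol h$), we get, for any $y\in I$,
\begin{equation*}
\wh a_{\ol h}(y)\le\wh a_{\ol h}(y_0)+|y-y_0|\le\wh a_{\ol h}(y_0)+|y_0|+|y|
\end{equation*}
for a fixed reference point $y_0\in I^\circ$. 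Setting $C_0=\wh a_{\ol h}(y_0)+|y_0|\in[0,\infty)$ and using $\wh a_h(y)\le\wh a_{\ol h}(y)$ gives $\wh a_h(y)\le C_0+|y|$ for all $y\in I$ and all $h\in(0,\ol h]$. (Alternatively, one can avoid invoking the $h$-monotonicity: since the Lipschitz-$1$ bound holds for \emph{every} $h$, write $\wh a_h(y)\le\wh a_h(y_0)+|y-y_0|$; but then one still needs $\sup_{h\in(0,\ol h]}\wh a_h(y_0)<\infty$, which is again the $h$-monotonicity, so the first route is the more economical one.)

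I do not anticipate a genuine obstacle here — the corollary is a soft consequence of Theorem~\ref{prop:comp_princ}. The only point requiring a moment's care is the uniformity in $h$ in the growth bound: one must isolate the $h$-dependence into a single quantity (the value of $\wh a$ at one point), and for that the monotonicity in $h$ from Proposition~\ref{prop:07022020a1} is the convenient tool, since it reduces everything to the largest parameter $\ol h$. Everything else is the two-line computation combining the $z=1$ and $z=-1$ instances of the comparison principle.
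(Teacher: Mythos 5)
Your proposal is correct and follows essentially the same route as the paper: the Lipschitz bound is read off from the two instances $z=\pm1$ of Theorem~\ref{prop:comp_princ}, and the growth bound uses the Lipschitz estimate together with $\sup_{h\in(0,\ol h]}\wh a_h(y_0)=\wh a_{\ol h}(y_0)<\infty$ from Proposition~\ref{prop:07022020a1}, with $C_0=|y_0|+\wh a_{\ol h}(y_0)$ exactly as in the paper.
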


\begin{proof}
The first statement is an immediate consequence of the comparison principle.
The second statement easily follows from the first one with, e.g.,
$C_0=|y_0|+\sup_{h\in(0,\ol h]}\wh a_h(y_0)=|y_0|+\wh a_{\ol h}(y_0)<\infty$
(recall Proposition~\ref{prop:07022020a1}),
where $y_0$ is an arbitrary point in~$I$.
\end{proof}

\begin{remark}
Corollary~\ref{prop:26092018a1} is named as above
to stress the difference with the Euler scheme in the SDE case,
where the Euler scale factors $a^{Eu}_h(y)=\eta(y)\sqrt h$,
as functions of $y$,
inherit irregularities and the growth from $\eta$.
On the contrary, the EMCEL scale factors $\wh a_h$
are as described in Corollary~\ref{prop:26092018a1},
no matter how irregular $\eta$ is in the SDE case
and also beyond the SDE case.
\end{remark}

Corollary~\ref{prop:26092018a1}
provides the functional bound $C_0+|y|$
(independent of $h$)
for all functions $\wh a_h$, $h\in(0,\ol h]$.
We also know that $\wh a_h(y)\to0$, $h\to0$, for all fixed $y\in I$
(recall Proposition~\ref{prop:09022020a1}
and $\wh a_h(l)=\wh a_h(r)=0$).
A natural question is now to find a functional bound
for $\wh a_h$, $h\in(0,\ol h]$,
that depends on $h$ and vanishes as $h\to0$.
However, this does not seem to be feasible in general,
as the order of convergence (in $h$)
of $\wh a_h(y)$ to zero can be different in different points $y$.
The discussion following Remark~\ref{rem:27092018a1}
suggests that the precise forms of such functional bounds
have to depend on the structure of the speed measure $m$.
We, finally, present a result of such kind.

\begin{propo}\label{prop:09022020b1}
Suppose that we have
\begin{equation}\label{eq:09022020b2}
m(dx)\ge\frac{dx}{g(|x|)}\quad\text{on }I^\circ
\end{equation}
(understood in the integral form) with some positive nondecreasing function
$g\colon[0,\infty)\to(0,\infty)$.
Then, with any constant $C_0\in[0,\infty)$
satisfying~\eqref{eq:09022020b1}, we obtain
\begin{equation}\label{eq:lin_growth_sf}
\wh a_h(y)\le\sqrt{2g(C_0+2|y|)h}
\quad\text{for all }y\in I\text{ and }h\in(0,\ol h].
\end{equation}
\end{propo}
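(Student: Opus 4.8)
The key idea is to combine the lower bound on the speed measure~\eqref{eq:09022020b2} with the functional bound~\eqref{eq:09022020b1} from Corollary~\ref{prop:26092018a1} and the characterization of $\wh a_h(y)$ via~\eqref{eq:08112019a2} (or the inequality~\eqref{eq:08112019a1}). Fix $y\in I$ and $h\in(0,\ol h]$. If $y\in\{l,r\}$ there is nothing to prove, so assume $y\in I^\circ$. Set $a=\wh a_h(y)$ and recall from~\eqref{eq:08112019a1} that
\[
\frac12\int_{(y-a,y+a)}(a-|u-y|)\,m(du)\le h.
\]

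First I would bound the left-hand side from below using~\eqref{eq:09022020b2}. Restricting the integral to the middle half $(y-a/2,y+a/2)$, where $a-|u-y|\ge a/2$, and using~\eqref{eq:09022020b2} together with the monotonicity of $g$, we get
\[
\frac12\int_{(y-a,y+a)}(a-|u-y|)\,m(du)
\ \ge\ \frac{a}{4}\cdot\frac{a}{\sup_{|x-y|\le a/2}g(|x|)}
\ \ge\ \frac{a^2}{4\,g\!\left(|y|+\tfrac a2\right)},
\]
since $|x|\le|y|+|x-y|\le|y|+a/2$ for $x$ in the relevant interval and $g$ is nondecreasing. Combining with the displayed inequality above yields $a^2\le 4\,g(|y|+a/2)\,h$.

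Next I would eliminate the $a$ inside $g$ using~\eqref{eq:09022020b1}: since $a=\wh a_h(y)\le C_0+|y|$, we have $|y|+a/2\le |y|+(C_0+|y|)/2\le C_0+2|y|$ (using $C_0\ge0$), hence, again by monotonicity of $g$,
\[
a^2\ \le\ 4\,g\!\left(C_0+2|y|\right)h,
\]
which gives $\wh a_h(y)\le 2\sqrt{g(C_0+2|y|)\,h}=\sqrt{4g(C_0+2|y|)h}$. This is slightly better than~\eqref{eq:lin_growth_sf}; the factor $2$ versus $\sqrt2$ discrepancy suggests one should instead integrate over a thinner central strip — e.g.\ over $(y-\lambda a,y+\lambda a)$ and optimize, or simply use the sharper form of Lemma~\ref{prop:27092018a1}. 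Indeed, the first inequality in~\eqref{eq:30092018a1} reads $\tfrac a2 m([y-a/2,y+a/2])\le 2h$, which, combined with $m([y-a/2,y+a/2])\ge a/g(|y|+a/2)$ from~\eqref{eq:09022020b2}, already gives $a^2\le 4g(|y|+a/2)h$ — the same bound. To recover the stated constant $\sqrt2$ one uses instead the full integral identity and the exact evaluation $\int_{-a}^{a}(a-|u|)\,du=a^2$, so that $\tfrac12\int_{(y-a,y+a)}(a-|u-y|)\,m(du)\ge\tfrac12\cdot\tfrac{a^2}{g(|y|+a/2)}\cdot\mathbf 1=\tfrac{a^2}{2g(|y|+a/2)}$ when $g$ is constant on the interval, giving $a^2\le 2g(|y|+a/2)h\le 2g(C_0+2|y|)h$ and hence~\eqref{eq:lin_growth_sf} exactly.

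**Main obstacle.** The only genuine subtlety is the circularity in bounding $g$ at the argument $|y|+a/2$, which involves the unknown $a$ itself; this is resolved cleanly by the a priori linear bound~\eqref{eq:09022020b1}. A secondary point of care is that~\eqref{eq:09022020b2} is assumed only in integral form on $I^\circ$, so one must make sure the interval $(y-a/2,y+a/2)$ (or the truncation used) lies in $I^\circ$ — which it does since $y\pm a\in\ol I=[l,r]$ by~\eqref{eq:08112019a3}, and the integrand in the inequality~\eqref{eq:08112019a1} is supported on $(y-a,y+a)\subseteq I^\circ$ up to at most the two endpoints, which carry no mass under the absolutely continuous lower bound. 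Boundary points $y\in\{l,r\}$ are handled trivially since $\wh a_h(l)=\wh a_h(r)=0$.
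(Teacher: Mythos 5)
Your final argument is correct and is essentially the paper's proof: bound the defining integral below by $\int_{y-a}^{y+a}\frac{a-|u-y|}{g(|u|)}\,du\ge\frac{a^2}{g(|y|+a)}$ using \eqref{eq:09022020b2}, the exact evaluation $\int_{-a}^{a}(a-|u|)\,du=a^2$ and the monotonicity of $g$, then remove the unknown $a$ from inside $g$ via the a priori bound \eqref{eq:09022020b1}. The only blemishes are the initial half-interval detour (which loses a factor and which you correctly discard) and the stray $g(|y|+a/2)$ in the last display, which should be $g(|y|+a)$ — harmless, since $|y|+a\le C_0+2|y|$ still holds by \eqref{eq:09022020b1}.
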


We list a couple of specific functional bounds for $\wh a_h$
implied by Proposition~\ref{prop:09022020b1}.

\smallskip
(a) Let \eqref{eq:09022020b2} be satisfied with
$g(x)=c(1+x^p)$, $x\in[0,\infty)$, for some $c,p\in(0,\infty)$.
Then there exists $C\in(0,\infty)$ such that
$$
\wh a_h(y)\le C(1+|y|^{p/2})\sqrt h
\quad\text{for all }y\in I\text{ and }h\in(0,\ol h].
$$

\smallskip
(b) Let \eqref{eq:09022020b2} be satisfied with
$g(x)=c\exp\{px\}$, $x\in[0,\infty)$, for some $c,p\in(0,\infty)$.
Then there exists $C\in(0,\infty)$ such that
$$
\wh a_h(y)\le C\exp\{p|y|\}\sqrt h
\quad\text{for all }y\in I\text{ and }h\in(0,\ol h].
$$

\begin{proof}[Proof of Proposition~\protect\ref{prop:09022020b1}]
It follows from~\eqref{eq:08112019a1} and~\eqref{eq:09022020b2}
that, for all $y\in I^\circ$ and $h\in(0,\ol h]$, it holds
\begin{equation*}
\begin{split}
2h&\ge\int_{(y-\wh a_h(y),y+\wh a_h(y))}
(\wh a_h(y)-|u-y|)\,m(du)
\\[1mm]
&\ge\int_{y-\wh a_h(y)}^{y+\wh a_h(y)}
\frac{\wh a_h(y)-|u-y|}{g(|u|)}\,du
=\wh a_h(y)^2
\int_{-1}^1
\frac{1-|z|}{g(|y+z\wh a_h(y)|)}\,dz
\\[1mm]
&\ge
\frac{\wh a_h(y)^2}{\sup_{z\in[-1,1]}g(|y+z\wh a_h(y)|)}
=\frac{\wh a_h(y)^2}{g(|y|+|\wh a_h(y)|)}.
\end{split}
\end{equation*}
The claim now follows from~\eqref{eq:09022020b1}.
\end{proof}

\subsection{ODE for the scale factors}\label{sec:cat3}
We, finally, turn to properties from category~\eqref{it:31012020a3} of the introduction,
i.e., the properties that help implementing the EMCEL scheme in specific situations.
It follows from the discussion in Remark~\ref{rem:02022020a1}
that the main challenge in implementing the EMCEL scheme
is to determine the scale factor $\wh a_h$ (for a fixed $h\in(0,\ol h]$)
inside $(l_h,r_h)$ because it requires to solve the nonlinear equation~\eqref{eq:08112019a2}
for all $y\in(l_h,r_h)$.
On the contrary, there is no problem to determine $\wh a_h$ outside $(l_h,r_h)$
(recall~\eqref{eq:22022019a4}).

Theorem~\ref{theo:ode} below shows that $\wh a_h$ is a unique solution to ODE~\eqref{eq:28042021a1} inside $(l_h,r_h)$.
Thus, in order to implement the scheme, it is enough to solve~\eqref{eq:08112019a2} numerically only for some $y_0\in(l_h,r_h)$
(not for all $y\in(l_h,r_h)$), which provides the initial condition for the ODE, and then to apply an appropriate ODE solver.
It is worth noting that the ODE itself does not depend on the discretization parameter $h$.
Dependence on $h$ comes into the picture through the initial condition (solving~\eqref{eq:08112019a2} for some $y_0\in(l_h,r_h)$).

In fact, a solution to ODE~\eqref{eq:28042021a1} is understood in the sense that it is an absolutely continuous function satisfying~\eqref{eq:28042021a1} almost everywhere
(with respect to the Lebesgue measure).
In general, we cannot require~\eqref{eq:28042021a1} everywhere, as we treat \emph{all} possible speed measures (they can, e.g., have atoms).
Essentially, the first part of Theorem~\ref{theo:ode} deals with existence and the second with uniqueness for ODE~\eqref{eq:28042021a1}.
The minimal requirement for the uniqueness is exactly the one mentioned above:
the solution must be absolutely continuous and satisfy~\eqref{eq:28042021a1} almost everywhere.
But in the existence part of Theorem~\ref{theo:ode} we provide more detail about what \emph{de facto} holds for the EMCEL scale factor $\wh a_h$.
For instance, it turns out that $\wh a_h$ is differentiable everywhere on $(l_h,r_h)$ except at most on a
countable set, regardless of how ``irregular'' the speed measure $m$ is.

Below we use the notation $\mu_L$ for the Lebesgue measure.

\begin{theo}\label{theo:ode}
Let $h\in (0,\ol h]$.

(i) For all $y\in (l_h,r_h)$ the right derivative $\partial_+\wh a_h(y)=\lim_{\eps \searrow 0}\frac{\wh a_h(y+\eps)-\wh a_h(y)}{\eps}$ and the left derivative $\partial_-\wh a_h(y)=\lim_{\eps \searrow 0}\frac{\wh a_h(y)-\wh a_h(y-\eps)}{\eps}$ exist and it holds that
\begin{equation}\label{eq:right_der}
\partial_+\wh a_h(y)=\frac{m((y-\wh a_h(y),y])-m((y,y+\wh a_h(y)])}{m((y-\wh a_h(y),y+\wh a_h(y)])}
\end{equation}
and
\begin{equation}\label{eq:left_der}
\partial_-\wh a_h(y)=\frac{m([y-\wh a_h(y),y))-m([y,y+\wh a_h(y)))}{m([y-\wh a_h(y),y+\wh a_h(y)))}.
\end{equation}
Moreover, $\partial_+\wh a_h$ is c\`{a}dl\`{a}g, $\partial_-\wh a_h$ is c\`{a}gl\`{a}d and, for all $y\in (l_h,r_h)$, we have
$\partial_+\wh a_h(y-):=\lim_{\eps\searrow 0}\partial_+\wh a_h(y-\eps)=
\partial_-\wh a_h(y)$. In particular, there exists an at most countable set $\mathcal N\subset (l_h,r_h)$ such that $\wh a_h'$ exists on $(l_h,r_h)\setminus \mathcal N$ and for all $y\in (l_h,r_h)\setminus \mathcal N$ we have $\wh a_h'(y)=\partial_+\wh a_h(y)=\partial_-\wh a_h(y)$.

\smallskip (ii)
Let $y_0\in (l_h,r_h)$ and let $a\colon (l_h,r_h)\to (0,\infty)$ be an absolutely continuous function
on compact subintervals of $(l_h,r_h)$ that satisfies, 
for $\mu_L$-almost all $y\in (l_h,r_h)$,
\begin{equation}\label{eq:28042021a1}
a'(y)=F(y,a(y))
\end{equation}
and $a(y_0)=\wh a_h(y_0)$. Here, the function $F\colon(l_h,r_h)\times(0,\infty)\to\bbR$ is defined by the formulas
\begin{align}
F(y,a)&=\frac{m((y-a,y])-m((y,y+a])}{m((y-a,y+ a])},\quad(y,a)\in\cG,
\label{eq:28042021a2}\\
F(y,a)&=0,\quad(y,a)\in(l_h,r_h)\times(0,\infty)\setminus\cG,
\label{eq:28042021a3}
\end{align}
where the domain $\cG$ is defined as
\begin{equation}\label{eq:28042021a4}
\cG=\{(y,a)\in(l_h,r_h)\times(0,\infty)\colon y\pm a\in I^\circ\}.
\end{equation}
Then we have $a(y)=\wh a_h(y)$ for all $y\in (l_h,r_h)$.
\end{theo}

To explain ODE~\eqref{eq:28042021a1} in more detail we now make several comments:
\begin{itemize}
\item
For $y\in(l_h,r_h)$, we have $y\pm\wh a_h(y)\in I^\circ$, i.e., $(y,\wh a_h(y))\in\cG$. In other words, for the EMCEL scale factor, the right-hand side of~\eqref{eq:28042021a1} reduces to the right-hand side of~\eqref{eq:28042021a2} only, i.e., \eqref{eq:28042021a3}~is not needed.
\item
In~\eqref{eq:28042021a3} we extend the function $F$ beyond the domain $\cG$ only because we need a well-defined right-hand side of~\eqref{eq:28042021a1} for functions $a$ that \emph{a priori} need not coincide with the EMCEL scale factor.\footnote{Notice that the right-hand side of~\eqref{eq:28042021a2} can fail to be well-defined outside $\cG$ because the speed measure $m$ can be infinite near the boundary points of $I$; cf.~\eqref{eq:06072018a1}.}
\item
On the other hand, it will become clear from the proof that it does not matter how to define $F$ outside $\cG$. In this sense, \eqref{eq:28042021a3}~is not important.
\end{itemize}

The proof of Theorem~\ref{theo:ode} is based on the following lemma.

\begin{lemma}\label{lem:help_ode}
Fix $h\in (0,\ol h]$.
Let $a\colon (l_h,r_h)\to (0,\infty)$ be a function such that,
for all $y_1,y_2\in (l_h,r_h)$, it holds $y_1\pm a(y_1)\in I^\circ$ and $|a(y_2)-a(y_1)|\le|y_2-y_1|$. 
Define the function $H\colon  (l_h,r_h)\to (0,\infty)$ by the formula
$H(y)=\int_{(y-a(y),y+a(y))}(a(y)-|u-y|)\, m(du)$.
Then $H$ is locally Lipschitz continuous and, for all $y_1<y_2$ in $(l_h,r_h)$ sufficiently close to each other\footnote{In the sense
$y_2-a(y_2)\le y_1<y_2\le y_1+a(y_1)$,
which is needed for the right ordering between the endpoints of the intervals involved in~\eqref{eq:help_ode}.}, we have the representation
\begin{equation}\label{eq:help_ode}
\begin{split}
H(y_2)-H(y_1)&=(a(y_2)-a(y_1))m([y_2-a(y_2),y_1+a(y_1)])\\
&\quad+(y_2-y_1)(m([y_2,y_1+a(y_1)])-m([y_2-a(y_2),y_1]))+R(y_1,y_2),
\end{split}
\end{equation}
where, for the remainder term $R(y_1,y_2)$ in~\eqref{eq:help_ode}, it holds
\begin{align}
&R(y,y+\Delta y)\in o(\Delta y),\quad\Delta y\searrow0,
\label{eq:07042020a1}\\
&R(y-\Delta y,y)\in o(\Delta y),\quad\Delta y\searrow0,
\label{eq:07042020a2}
\end{align}
for all $y\in(l_h,r_h)$.
\end{lemma}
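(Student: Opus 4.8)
The plan is to first establish the representation~\eqref{eq:help_ode} by direct decomposition of the integrals defining $H(y_1)$ and $H(y_2)$, then extract from this representation both the local Lipschitz continuity and the estimates~\eqref{eq:07042020a1}--\eqref{eq:07042020a2} on the remainder. Throughout, fix $y_1<y_2$ in $(l_h,r_h)$ so close that $y_2-a(y_2)\le y_1<y_2\le y_1+a(y_1)$; the Lipschitz assumption $|a(y_2)-a(y_1)|\le|y_2-y_1|$ guarantees such a regime for $y_2-y_1$ small. Write $I_i=(y_i-a(y_i),y_i+a(y_i))$ and note that under the ordering condition the left endpoints satisfy $y_1-a(y_1)\le y_2-a(y_2)$ (again from the Lipschitz bound) and the right endpoints satisfy $y_1+a(y_1)\le y_2+a(y_2)$, so the two intervals ``overlap in a staircase pattern''. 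The integrand $u\mapsto(a(y_i)-|u-y_i|)^+$ is a tent function of height $a(y_i)$ centered at $y_i$; I would split $\mathbb R$ into the five (possibly degenerate) consecutive pieces cut out by the four endpoints $y_1-a(y_1)$, $y_2-a(y_2)$, $y_1$, $y_2$, $y_1+a(y_1)$, $y_2+a(y_2)$ — actually the relevant breakpoints are these endpoints together with the apexes $y_1,y_2$ — and compute $H(y_2)-H(y_1)=\int (a(y_2)-|u-y_2|)^+\,m(du)-\int (a(y_1)-|u-y_1|)^+\,m(du)$ piece by piece.

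The key algebraic identity is that on the central region, where both tent functions are ``in their linear regime,'' the difference of the two integrands is exactly $(a(y_2)-a(y_1))\mp(y_2-y_1)$, constant in $u$ (sign depending on which side of the apexes), which is what produces the first two explicit terms on the right-hand side of~\eqref{eq:help_ode}: the term $(a(y_2)-a(y_1))\,m([y_2-a(y_2),y_1+a(y_1)])$ collects the constant vertical shift over the common linear range $[y_2-a(y_2),y_1+a(y_1)]$, and the term $(y_2-y_1)(m([y_2,y_1+a(y_1)])-m([y_2-a(y_2),y_1]))$ collects the horizontal-shift contribution, with opposite signs on the two sides of the apex interval $[y_1,y_2]$. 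Everything else — the contributions from the four short ``edge'' intervals $[y_1-a(y_1),y_2-a(y_2)]$, $[y_1,y_2]$ (near the apexes), and $[y_1+a(y_1),y_2+a(y_2)]$ — is absorbed into $R(y_1,y_2)$. On each such edge interval the integrand is bounded by $\max\{a(y_1),a(y_2)\}$ times... no: better, on the leftmost edge $[y_1-a(y_1),y_2-a(y_2)]$ both tent functions are at most of height $|y_2-a(y_2)-(y_1-a(y_1))|\le 2|y_2-y_1|$ above zero there, and the $m$-mass of that interval is itself $O(|y_2-y_1|\cdot(\text{local density bound}))$... but $m$ has no density, so I instead bound the integrand pointwise by $2|y_2-y_1|$ and note the $m$-mass of the edge interval tends to $0$; hence the contribution is $o(|y_2-y_1|)$ provided the edge intervals have vanishing $m$-mass, which holds because their lengths go to $0$ and $m$ is finite on compacts of $I^\circ$ (using $y_i\pm a(y_i)\in I^\circ$ and~\eqref{eq:06072018a1}). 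This gives~\eqref{eq:07042020a1}--\eqref{eq:07042020a2} after checking the two one-sided limits separately.

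For the local Lipschitz continuity of $H$, I would argue directly from~\eqref{eq:help_ode} (valid for $y_1,y_2$ close) that $|H(y_2)-H(y_1)|\le |a(y_2)-a(y_1)|\,m(J)+|y_2-y_1|\,m(J)+|R(y_1,y_2)|\le 2|y_2-y_1|\,m(J)+|R(y_1,y_2)|$ where $J$ is a fixed compact subinterval of $I^\circ$ containing all the relevant endpoints for $y_1,y_2$ ranging over a fixed compact subinterval of $(l_h,r_h)$; since $m(J)<\infty$ and $|R(y_1,y_2)|=o(|y_2-y_1|)$, this yields a uniform Lipschitz bound on that compact subinterval, and then a standard chaining argument removes the ``sufficiently close'' restriction. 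The main obstacle I anticipate is purely bookkeeping: getting the signs and the exact endpoints right in the five-piece decomposition — in particular making sure the apexes $y_1,y_2$ sit inside the ``common linear range'' so that the two explicit terms come out with the stated half-open intervals $[y_2-a(y_2),y_1+a(y_1)]$, $[y_2,y_1+a(y_1)]$, $[y_2-a(y_2),y_1]$ — and verifying that the leftover pieces genuinely live on intervals of length $O(|y_2-y_1|)$ so that the pointwise bound $O(|y_2-y_1|)$ on the integrand, combined with the vanishing (not just finite) $m$-mass of those shrinking intervals, delivers $o(|y_2-y_1|)$ rather than merely $O(|y_2-y_1|)$. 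No atom-related subtlety arises here because we only ever integrate continuous tent functions against $m$ over compact subsets of $I^\circ$.
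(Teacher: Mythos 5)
Your proposal is correct and follows essentially the same route as the paper: the same splitting of $H(y_2)-H(y_1)$ over the overlap region (where the difference of the two tent functions is affine and yields the two explicit terms) and the short edge/apex intervals, whose contributions are controlled by a pointwise $O(|y_2-y_1|)$ bound on the integrand times the vanishing $m$-mass of the shrinking intervals. One small point of care: for the \emph{uniform} local Lipschitz bound you should use the explicit estimate $|R(y_1,y_2)|\le C\,|y_2-y_1|\,m(J)$ coming from your edge bounds (as the paper does), rather than the pointwise statement $R(y_1,y_2)=o(|y_2-y_1|)$, which is only asserted with one endpoint fixed.
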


\begin{proof}[Proof of Theorem~\protect\ref{theo:ode}]
(i) First note that it follows from Corollary~\ref{prop:26092018a1} that $\wh a_h$ is Lipschitz continuous on $(l_h,r_h)$ with Lipschitz constant $1$. Therefore we are in a position to apply Lemma~\ref{lem:help_ode} with $a=\wh a_h$ in the notation of Lemma~\ref{lem:help_ode} and we denote by $\wh H$ the associated function $H$. It follows from Remark~\ref{rem:02022020a1} that for all $y\in (l_h,r_h)$ we have 
$\wh H(y)=2h$. Therefore, we obtain from \eqref{eq:help_ode} and~\eqref{eq:07042020a1} that,
for all $y\in (l_h,r_h)$ and $\Delta y>0$,
\begin{equation}
\begin{split}
\frac{\wh a_h(y+\Delta y)-\wh a_h(y)}{\Delta y}
&=\frac{m([y+\Delta y-\wh a_h(y+\Delta y),y])-m([y+\Delta y,y+\wh a_h(y)])}{m([y+\Delta y-\wh a_h(y+\Delta y),y+\wh a_h(y)])}\\[1mm]
&\quad+o(1),\quad\Delta y\searrow0.
\end{split}
\end{equation}
This yields~\eqref{eq:right_der} (recall that the functions $y\mapsto y\pm \wh a_h(y)$ are strictly increasing on $(l_h,r_h)$ by Remark~\ref{rem:07042020a1}).
In a similar way, using~\eqref{eq:07042020a2} instead of~\eqref{eq:07042020a1}, we obtain~\eqref{eq:left_der}.
The further claims in (i) follow from these two formulas.

\smallskip (ii) We first prove the claim under the additional assumption $y\pm a(y)\in I^\circ$ for all $y\in(l_h,r_h)$
(notice that the EMCEL scale factor $\wh a_h$ satisfies this assumption).
It follows from~\eqref{eq:28042021a1} that for $\mu_L$-almost all $y\in (l_h,r_h)$ we have $|a'(y)|<1$ (recall~\eqref{eq:06072018a1}).
As $a$ is absolutely continuous, for $y_1<y_2$ in $(l_h,r_h)$, we have
\begin{equation}\label{eq:08042020a1}
|a(y_2)-a(y_1)|=\left|\int_{y_1}^{y_2}a'(y)\,dy\right|\le\int_{y_1}^{y_2}|a'(y)|\,dy<y_2-y_1
\end{equation}
and, in particular, the functions $y\mapsto y\pm a(y)$ are strictly increasing on $(l_h,r_h)$.
Notice that the latter implies that the set
\begin{equation}\label{eq:count_set}
\left\{y\in(l_h,r_h):m\left(\{y-a(y)\}\cup\{y\}\cup\{y+a(y)\}\right)>0\right\}
\end{equation}
is at most countable, hence $\mu_L$-negligible.
Due to~\eqref{eq:08042020a1} we can apply Lemma~\ref{lem:help_ode}.
We obtain that the function $H$, as defined in Lemma~\ref{lem:help_ode}, is locally Lipschitz continuous
and hence absolutely continuous on every compact subinterval of $(l_h,r_h)$. Moreover,
it follows from \eqref{eq:help_ode}, \eqref{eq:07042020a1}, \eqref{eq:07042020a2}
and the fact that the set in \eqref{eq:count_set} is $\mu_L$-negligible
that, for $\mu_L$-almost all $y\in (l_h,r_h)$, we have
$$
H'(y)=a'(y)m((y-a(y),y+ a(y)])+m((y,y+ a(y)])-m((y-a(y),y])=0.
$$
Consequently, $H$ is constant on $(l_h,r_h)$ and it follows for all $y\in (l_h,r_h)$ that
$$
\int_{(y-a(y),y+a(y))}(a(y)-|u-y|)\, m(du)=H(y)=H(y_0)=2h.
$$
The claim that $a$ and $\wh a_h$ are identical now follows from Remark~\ref{rem:02022020a1}.

It remains to drop the assumption $y\pm a(y)\in I^\circ$ for all $y\in(l_h,r_h)$.
As $a(y_0)=\wh a_h(y_0)$, it holds $y_0\pm a(y_0)\in I^\circ$.
By the continuity of $a$, we get $y\pm a(y)\in I^\circ$ in a sufficiently small neighborhood of $y_0$.
Hence, by the considerations above, $a$ and $\wh a_h$ coincide in this neighborhood of $y_0$.
Define
\begin{align*}
\wt l&=\inf\{y\in(l_h,y_0]:a(z)=\wh a_h(z)\text{ for all }z\in[y,y_0]\}\quad(\in[l_h,y_0)),\\
\wt r&=\sup\{y\in[y_0,r_h):a(z)=\wh a_h(z)\text{ for all }z\in[y_0,y]\}\quad(\in(y_0,r_h]).
\end{align*}
If $\wt l>l_h$, we repeat the preceding argumentation with $y_0$ replaced by $\wt l$
and conclude that $a$ and $\wh a_h$ coincide in some neighborhood of $\wt l$,
which contradicts the definition of $\wt l$.
Thus, $\wt l=l_h$. Similarly, $\wt r=r_h$.
This completes the proof.
\end{proof}

\begin{proof}[Proof of Lemma~\protect\ref{lem:help_ode}]
Throughout the proof we work with various choices of $y_1<y_2$ in $(l_h,r_h)$.
Note that if $y_2-y_1$ is small enough we have that $y_2-a(y_2)\le y_1<y_2\le y_1+a(y_1)$. Moreover, it follows from the assumption that $a$ is Lipschitz continuous on $(l_h,r_h)$ with Lipschitz constant $1$ that $y_1-a(y_1)\le y_2-a(y_2)$ and 
$y_1+a(y_1)\le y_2+a(y_2)$. To summarize, we have for $y_2-y_1$ small enough that
$$y_1-a(y_1)\le y_2-a(y_2)\le y_1 < y_2 \le y_1+a(y_1)\le y_2+a(y_2).$$
Therefore, we have that
\begin{equation}\label{eq:help_ode_1}
\begin{split}
H(y_2)-H(y_1)&=
-\int_{(y_1-a(y_1),y_2-a(y_2))}(a(y_1)-|u-y_1|)\,m(du)\\
&\quad +
\int_{[y_2-a(y_2),y_1+a(y_1)]}(a(y_2)-|u-y_2|-a(y_1)+|u-y_1|)\,m(du)\\
&\quad +
\int_{(y_1+a(y_1),y_2+a(y_2))}(a(y_2)-|u-y_2|)\,m(du)\\
&=
-\int_{(y_1-a(y_1),y_2-a(y_2))}(a(y_1)+u-y_1)\,m(du)\\
&\quad +
(a(y_2)-a(y_1))m([y_2-a(y_2),y_1+a(y_1)])
\\
& \quad +
\int_{[y_2-a(y_2),y_1+a(y_1)]}(|u-y_1|-|u-y_2|)\,m(du)\\
&\quad +
\int_{(y_1+a(y_1),y_2+a(y_2))}(a(y_2)-u+y_2)\,m(du).
\end{split}
\end{equation}
For each fixed $y_1$ and moving $y_2$ such that $y_2-y_1\searrow 0$ we have
\begin{equation}
\begin{split}\label{eq:help_ode_2}
&\left|\int_{(y_1-a(y_1),y_2-a(y_2))}(a(y_1)+u-y_1)\,m(du)\right |
=\int_{(y_1-a(y_1),y_2-a(y_2))}(a(y_1)+u-y_1)\,m(du)\\
&\le (y_2-y_1+a(y_1)-a(y_2))m((y_1-a(y_1),y_2-a(y_2)))\\
&\le 2(y_2-y_1)m((y_1-a(y_1),y_2-a(y_2)))\in o(y_2-y_1)
\end{split}
\end{equation}
and similarly
\begin{multline}\label{eq:help_ode_3}
\left|\int_{(y_1+a(y_1),y_2+a(y_2))}(a(y_2)-u+y_2)\,m(du)\right |\\
\le 2(y_2-y_1)m((y_1+a(y_1),y_2+a(y_2)))\in o(y_2-y_1).
\end{multline}
Moreover, it holds that
\begin{multline}\label{eq:help_ode_4}
\int_{[y_2-a(y_2),y_1+a(y_1)]}(|u-y_1|-|u-y_2|)\,m(du)\\
=
(y_2-y_1)(m([y_2,y_1+a(y_1)])-m([y_2-a(y_2),y_1]))+
\int_{(y_1,y_2)}(2u-y_1-y_2)\,m(du)
\end{multline}
and that, again for a fixed $y_1$ and moving $y_2$ such that $y_2-y_1\searrow 0$,
\begin{equation}\label{eq:help_ode_5}
\left | \int_{(y_1,y_2)}(2u-y_1-y_2)\,m(du)\right|\le (y_2-y_1)m((y_1,y_2)) \in o(y_2-y_1).
\end{equation}
Combining \eqref{eq:help_ode_1}--\eqref{eq:help_ode_5} we obtain,
for a fixed $y_1$ and moving $y_2$ such that $y_2-y_1\searrow 0$,
\begin{equation}
\begin{split}
H(y_2)-H(y_1)&=(a(y_2)-a(y_1))m([y_2-a(y_2),y_1+a(y_1)])\\
&\quad+(y_2-y_1)(m([y_2,y_1+a(y_1)])-m([y_2-a(y_2),y_1]))+o(y_2-y_1),
\end{split}
\end{equation}
which is \eqref{eq:help_ode} and~\eqref{eq:07042020a1}.
Property~\eqref{eq:07042020a2} follows from similar considerations,
only with fixed $y_2$ and moving $y_1$ such that $y_2-y_1\searrow0$.
Moreover, the preceding calculations imply that, for $y_2-y_1$ small enough,
\begin{equation}
\begin{split}
\left|\frac{H(y_2)-H(y_1)}{y_2-y_1}\right|&\le m([y_2-a(y_2),y_1+a(y_1)]) \left|\frac{a(y_2)-a(y_1)}{y_2-y_1}\right| \\
&\quad+2(m((y_1-a(y_1),y_2-a(y_2)))+m((y_1+a(y_1),y_2+a(y_2))))\\
&\quad+(m([y_2,y_1+a(y_1)])-m([y_2-a(y_2),y_1]))+m((y_1,y_2))\\
&\le 
2m((y_1-a(y_1),y_2+a(y_2))<\infty
\end{split}
\end{equation}
because, by the assumptions, $y_1-a(y_1),y_2+a(y_2)\in I^\circ$ (also recall~\eqref{eq:06072018a1}).
This implies that $H$ is Lipschitz continuous on every compact subinterval of $(l_h,r_h)$. This completes the proof of Lemma~\ref{lem:help_ode}.
\end{proof}

We now mention the following immediate consequence of Theorem~\ref{theo:ode} for the case when the speed measure $m$ does not have atoms in~$I^\circ$.

\begin{corollary}\label{cor:07042020a1}
Assume $m(\{y\})=0$ for any $y\in I^\circ$.
Then, for any $h\in(0,\ol h]$, the restriction of the EMCEL scale factor $\wh a_h$  to $(l_h,r_h)$ is a $C^1$ function that coincides with the unique solution to the initial value problem
\begin{equation}\label{eq:07042020a3}
a'(y)=F(y,a(y)),\quad y\in(l_h,r_h),\quad a(y_0)=\wh a_h(y_0),
\end{equation}
for any fixed $y_0\in(l_h,r_h)$.
\end{corollary}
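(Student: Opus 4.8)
The plan is to read everything off from Theorem~\ref{theo:ode}; the corollary is essentially the observation that, in the absence of atoms, the left/right-derivative dichotomy of Theorem~\ref{theo:ode}(i) collapses. First I would fix $h\in(0,\ol h]$ and note that for $y\in(l_h,r_h)$ we know from Remark~\ref{rem:02022020a1} that $\wh a_h(y)$ is the unique positive root of~\eqref{eq:08112019a2} lying in $[0,a_I(y))$, so that $y-\wh a_h(y)$, $y$ and $y+\wh a_h(y)$ all belong to $I^\circ$ and are therefore $m$-null points by hypothesis. Hence the half-open and closed intervals appearing in~\eqref{eq:right_der} and~\eqref{eq:left_der} carry the same $m$-mass, so the right-hand sides of those two formulas coincide and $\partial_+\wh a_h(y)=\partial_-\wh a_h(y)$ for every $y\in(l_h,r_h)$. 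In other words $\wh a_h$ is differentiable at every point of $(l_h,r_h)$, with derivative equal to the right-hand side of the ODE in~\eqref{eq:07042020a3}; trivially $\wh a_h(y_0)=\wh a_h(y_0)$, so $\wh a_h|_{(l_h,r_h)}$ is a classical, everywhere-defined solution of the initial value problem~\eqref{eq:07042020a3}. (The $\infty$-convention for the right-hand side of~\eqref{eq:ode} never intervenes here, precisely because $y\pm\wh a_h(y)\in I^\circ$ on $(l_h,r_h)$.)

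Next I would upgrade this to $C^1$. By Theorem~\ref{theo:ode}(i) the function $\partial_+\wh a_h$ is c\`adl\`ag and satisfies $\partial_+\wh a_h(y-)=\partial_-\wh a_h(y)$ for all $y\in(l_h,r_h)$; combined with the equality $\partial_-\wh a_h(y)=\partial_+\wh a_h(y)$ just established, this shows $\partial_+\wh a_h$ is also left-continuous, hence continuous. Alternatively, one may see continuity of $\wh a_h'$ directly: the maps $y\mapsto y\pm\wh a_h(y)$ are continuous (Corollary~\ref{prop:26092018a1}) and strictly increasing on $(l_h,r_h)$ (Remark~\ref{rem:07042020a1}), $\wh a_h$ is continuous and strictly positive (Proposition~\ref{prop:07022020a1}), and $m$ charges no point of $I^\circ$, so each of $y\mapsto m((y-\wh a_h(y),y])$, $y\mapsto m((y,y+\wh a_h(y)])$ and $y\mapsto m((y-\wh a_h(y),y+\wh a_h(y)])$ is continuous, the last with values in $(0,\infty)$ by~\eqref{eq:06072018a1}. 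Either way, $\wh a_h|_{(l_h,r_h)}\in C^1$.

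For the uniqueness part I would simply invoke Theorem~\ref{theo:ode}(ii): any classical solution $a\colon(l_h,r_h)\to(0,\infty)$ of~\eqref{eq:07042020a3} with $a(y_0)=\wh a_h(y_0)$ is in particular absolutely continuous on compact subintervals and satisfies~\eqref{eq:ode} at every, hence $\mu_L$-almost every, point, so part~(ii) yields $a\equiv\wh a_h$ on $(l_h,r_h)$. Positivity of $a$ is implicit in~\eqref{eq:07042020a3}, whose right-hand side only makes sense when $y-a(y)<y<y+a(y)$; near $y_0$ this holds because $a(y_0)=\wh a_h(y_0)>0$, and it then propagates along $(l_h,r_h)$ exactly as in the last paragraph of the proof of Theorem~\ref{theo:ode}(ii).

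I do not expect a genuine obstacle here: all the substance sits in Theorem~\ref{theo:ode} and Lemma~\ref{lem:help_ode}. The only points demanding a little care are the passage from ``$\partial_+\wh a_h=\partial_-\wh a_h$ pointwise'' to ``$\wh a_h'$ continuous'' (equivalently, the direct continuity argument above), and the bookkeeping that keeps all the relevant endpoints inside $I^\circ$ so that $m$ is finite and the intervals are nondegenerate.
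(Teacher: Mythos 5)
Your proposal is correct and follows exactly the route the paper intends: the paper gives no separate proof, presenting the corollary as an immediate consequence of Theorem~\ref{theo:ode}, and your filling-in (no atoms makes the right-hand sides of \eqref{eq:right_der} and \eqref{eq:left_der} coincide, the c\`adl\`ag/c\`agl\`ad statements then give continuity of the derivative, and uniqueness is read off from part~(ii)) is precisely that argument.
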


We stress that existence and uniqueness on $(l_h,r_h)$ for the initial value problem~\eqref{eq:07042020a3} are also claimed in Corollary~\ref{cor:07042020a1}. It is interesting to compare this with what we can get concerning the existence and uniqueness for~\eqref{eq:07042020a3} from standard results on ODEs:
\begin{itemize}
\item
The assumption $m(\{y\})=0$ for any $y\in I^\circ$ of Corollary~\ref{cor:07042020a1} implies that the function $F$ is continuous in the domain $\cG$ (recall \eqref{eq:28042021a2} and~\eqref{eq:28042021a4}). 
Therefore, Peano's existence theorem
(e.g., see Theorem~II.2.1 in \cite{Hartman2002} or Theorem~2.19 in \cite{Teschl2012})
yields that there is a solution to the initial value problem~\eqref{eq:07042020a3}, and it can be extended up to a boundary of $\cG$ (e.g., see Theorem~II.3.1 in \cite{Hartman2002} or \S~11 in \cite{Petrovsky1984}). This is weaker than the existence on the whole $(l_h,r_h)$ because the boundary of $\cG$ can be achieved at points with $y$-coordinates that are strictly between $l_h$ and $r_h$.
\item
Peano's existence theorem does not say anything about uniqueness.\footnote{E.g., the ODE $a'(y)=2\sqrt{|a|}$, $a(0)=0$, has different solutions $a\equiv0$ and $a(y)=y^2\sgn y$ (and there are other ones).
For illuminating examples of ODEs of the form $a'(y)=F(y,a(y))$ with a continuous function $F$ in some region of the $(y,a)$-plane such that the initial value problem has more than one solution in \emph{any} neighborhood of \emph{any} initial point $(y_0,a_0)$ in that region, see \cite{Lavrentieff1925} and/or Section~II.5 in \cite{Hartman2002}.}
Under the assumption of Corollary~\ref{cor:07042020a1} only, the function $a\mapsto F(y,a)$ (for fixed~$y$) need not be locally Lipschitz inside $\cG$; on the contrary, it can have a quite unpleasant behavior (e.g., think about the case when $m$ is singular with respect to the Lebesgue measure). Therefore, standard results on ODEs do not provide uniqueness for~\eqref{eq:07042020a3}.
\end{itemize}

This discussion raises the question of whether there is a numerical algorithm in order to compute the unique solution of the initial value problem~\eqref{eq:07042020a3}.
In what follows, we present such an algorithm.
As in Corollary~\ref{cor:07042020a1}, we assume that $m(\{y\})=0$ for all $y\in I^\circ$. Observe the following facts.
\begin{enumerate}[(a)]
\item\label{it:30042021aa}
$F$ is continuous in $\cG$.
\item\label{it:30042021ab}
$|F(y,a)|<1$ for all $(y,a)\in\cG$.
\item\label{it:30042021ac}
Notice, however, that $F$ is, in general, not extendable to a continuous function on~$\ol\cG$, where $\ol\cG$ denotes the closure of $\cG$ in $\bbR^2$.
\end{enumerate}
By $\partial\cG$ we denote the boundary of $\cG$ in $\bbR^2$:
$\partial\cG=\ol\cG\setminus\cG$.
It will be convenient to decompose $\partial\cG$ as follows:
$$
\partial\cG=\partial\cG_\low\cup\partial\cG_\high\cup\partial\cG_\vertical,
$$
where
\begin{align*}
\partial\cG_\low&=(l_h,r_h)\times\{0\},\\
\partial\cG_\high&=\{(y,a)\in(l_h,r_h)\times(0,\infty):a=(y-l)\wedge(r-y)\},\\
\partial\cG_\vertical&=\partial\cG\setminus(\partial\cG_\low\cup\partial\cG_\high)
\end{align*}
(cf.\ with~\eqref{eq:28042021a4}).
To provide more detail, we remark that, if $l=-r=-\infty$
(hence $l_h=-r_h=-\infty$),
then $\cG=\bbR\times(0,\infty)$, $\partial\cG=\partial\cG_\low$,
$\partial\cG_\high=\partial\cG_\vertical=\emptyset$;
otherwise we also have non-empty $\partial\cG_\high$ and $\partial\cG_\vertical$.
The most interesting case $-\infty<l<r<+\infty$ is illustrated in Figure~\ref{fig:30042021a1}, where we have chosen $l$ to be accessible and $r$ inaccessible in order to show different subcases.

\begin{figure}[!htb]
\begin{tikzpicture}
\draw[->] (-1,0) -- (13,0)
node[below right] {$y$};

\draw[line width=.7mm] (1.5,0)--(1.5,1.5);
\draw (0,0)--(6,6);
\draw[line width=.7mm] (1.5,1.5)--(6,6);
\draw[line width=.7mm] (6,6)--(12,0);
\draw[line width=.7mm] (1.5,0)--(12,0);

\draw (3,4) node{$\partial \mathcal{G}_\text{high}$};
\draw (9,4) node{$\partial \mathcal{G}_\text{high}$};
\draw (2.5,.75) node{$\partial \mathcal{G}_\text{vertical}$};
\draw (0,-0.5) node{$l$};
\draw (1.5,-0.5) node{$l_h$};
\draw (12,-0.5) node{$r_h = r$};
\draw (6,3) node{\Large $\mathcal{G}$};
\draw (6,-0.5) node{$\partial \mathcal{G}_\text{low}$};
\draw[->] (13,1) node[right]{$\partial \mathcal{G}_\text{vertical}$} --(12.1, 0.1) ;
\end{tikzpicture}
{\caption{\footnotesize  Form of the domain $\cG$ and its boundary $\partial\cG$ in the case $-\infty<l<r<+\infty$ with accessible $l$ and inaccessible~$r$ (so that $l_h>l$, $r_h=r$).
}\label{fig:30042021a1}}
\end{figure}
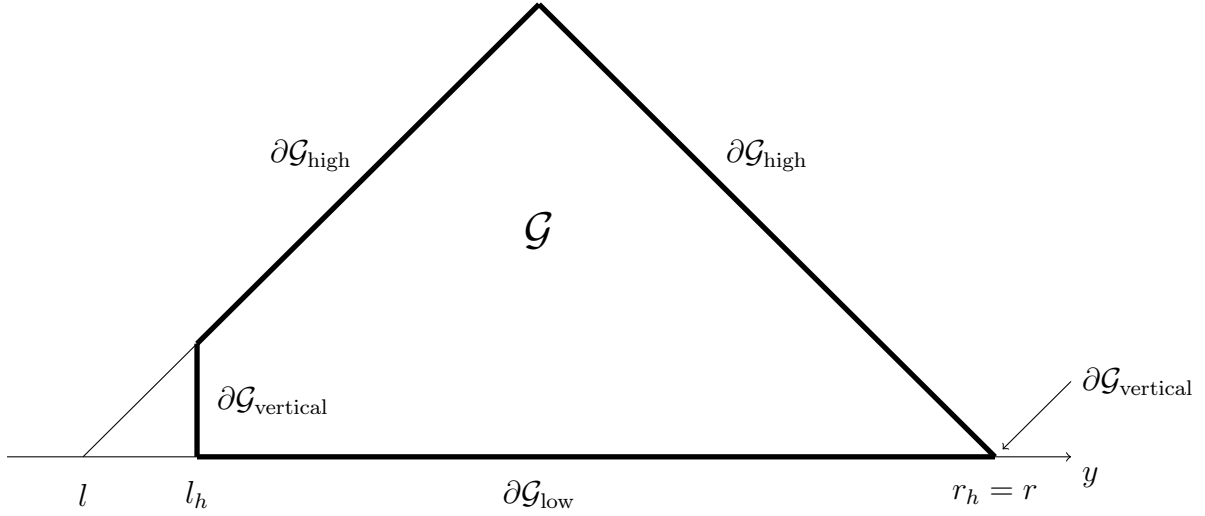

Let us fix a subinterval $(\alpha,\beta)$ of $(l_h,r_h)$ with finite endpoints $\alpha$ and~$\beta$ (we can take $\alpha=l_h$ if $l_h>-\infty$, and similarly with~$\beta$). We now describe how to approximate the unique solution of the initial value problem~\eqref{eq:07042020a3}, which is the EMCEL scale factor $y\mapsto\wh a_h(y)$, uniformly on $[\alpha,\beta]$.
Let $\cY$ be a partition of $[\alpha,\beta]$ that contains $y_0$ from~\eqref{eq:07042020a3} as one of its elements, i.e.,
$\cY=\{y_j\}_{j=K(\cY)}^{L(\cY)}$ with some $K(\cY),L(\cY)\in\bbZ$,
$K(\cY)<0<L(\cY)$ and
$$
\alpha=y_{K(\cY)}<\cdots<y_{-1}<y_0<y_1<\cdots<y_{L(\cY)}=\beta.
$$
We define a piecewise linear function $y\mapsto a^\cY(y)$ on $[y_0,\beta]$ as follows.
\begin{enumerate}
\item
Set $a^\cY(y_0)=\wh a_h(y_0)$ and observe that $(y_0,a^\cY(y_0))\in\cG$. Set $j=0$.

\item\label{it:30042021a2}
Define
$\wt a^\cY(y)=a^\cY(y_j)+(y-y_j)F(y_j,a^\cY(y_j))$ for all $y\in[y_j,y_{j+1}]$.

\item
If $(y_{j+1},\wt a^\cY(y_{j+1}))\in\cG$, then
\begin{itemize}
\item
set $a^\cY=\wt a^\cY$ on $(y_j,y_{j+1}]$,
\item
observe that, by convexity of $\cG$, the graph of $a^\cY$ on $[y_j,y_{j+1}]$ lies inside $\cG$,
\item
set $j=j+1$,
\item
if $j<L(\cY)$, then go to step~\ref{it:30042021a2};
else go to step~\ref{it:30042021a5}.
\end{itemize}

\item
If $(y_{j+1},\wt a^\cY(y_{j+1}))\notin\cG$, then
\begin{itemize}
\item
denote by $(\ol y,\ol a)$ the point, where the graph of $\wt a^{\cY}$ on $[y_j,y_{j+1}]$ intersects $\partial\cG$,
\item
if $\partial\cG_\vertical$ is intersected, which is only possible in the case $\ol y=\beta=r_h$, then set $a^\cY=\wt a^\cY$ on $(y_j,\ol y]$ ($\equiv(y_j,\beta]$) and go to step~\ref{it:30042021a5},
\item
if $\partial\cG_\high$ (resp., $\partial\cG_\low$) is intersected, then set $a^\cY=\wt a^\cY$ on $(y_j,\ol y]$, define $a^\cY$ on $(\ol y,\beta]$ such that its graph goes along $\partial\cG_\high$ on $(\ol y,\beta]$ (resp., set $a^\cY\equiv0$ on $(\ol y,\beta]$) and proceed with step~\ref{it:30042021a5}.
\end{itemize}

\item\label{it:30042021a5}
We constructed the function $[y_0,\beta]\ni y\mapsto a^\cY(y)$.
\end{enumerate}
The function $a^\cY$ is extended to $[\alpha,y_0]$ in a symmetric way.
We thus obtain a piecewise linear function
\begin{equation}\label{eq:30042021a1}
[\alpha,\beta]\ni y\mapsto a^\cY(y),\quad
(y,a^\cY(y))\in\ol\cG,
\end{equation}
which is, in fact, nothing else but Euler's polygonal approximation for~\eqref{eq:07042020a3} suitably extended to $[\alpha,\beta]$.

\begin{lemma}\label{lem:30042021a1}
Assume $m(\{y\})=0$ for all $y\in I^\circ$.
Let $(\alpha,\beta)$ be a subinterval of $(l_h,r_h)$ with finite endpoints $\alpha$ and~$\beta$ (we can take $\alpha=l_h$ if $l_h>-\infty$, and similarly with~$\beta$).
Then, for any sequence $\{\cY_N\}_{N\in\bbN}$ of partitions of $[\alpha,\beta]$, $\cY_N=\{y_j^{(N)}\}_{j=K_N(\cY_N)}^{L_N(\cY_N)}$,
$K_N(\cY_N)<0<L_N(\cY_N)$,
$$
\alpha=y_{K_N(\cY_N)}^{(N)}<\cdots<y_{-1}^{(N)}<y_0^{(N)}=y_0<y_1^{(N)}<\cdots<y_{L_N(\cY_N)}^{(N)}=\beta
$$
with
$$
|\cY_N|:=\max_{K_N(\cY_N)<j\le L_N(\cY_N)}(y_j^{(N)}-y_{j-1}^{(N)})\to0,\quad
N\to\infty,
$$
we have
\begin{equation}\label{eq:30042021a2}
\sup_{y\in[\alpha,\beta]}|a^{\cY_N}(y)-\wh a_h(y)|\to0,\quad
N\to\infty,
\end{equation}
i.e., the sequence $\{a^{\cY_N}\}$ of (suitably extended) Euler's polygonal approximations for~\eqref{eq:07042020a3} converges uniformly on $[\alpha,\beta]$ to the EMCEL scale factor $\wh a_h$, which is the unique solution of~\eqref{eq:07042020a3}.
\end{lemma}

\begin{proof}
The idea is very similar to that in the proof of Peano's existence theorem, where it is shown that, on a sufficiently small interval containing $y_0$, the sequence $\{a^{\cY_N}\}$ has a uniformly convergent subsequence that converges to a solution of~\eqref{eq:07042020a3}.
Peano's existence theorem applies in this form due to fact~\eqref{it:30042021aa} above and the fact that $(y_0,\wh a_h(y_0))\in\cG$.
From Corollary~\ref{cor:07042020a1} we know the existence and uniqueness of solution $\wh a_h$ on the whole $[\alpha,\beta]$ and, moreover, due to the properties of the EMCEL scale factor, $(y,\wh a_h(y))\in\cG$ for all $y\in(\alpha,\beta)$ (even on $(l_h,r_h)$).
We need to make use of that to prove the result and, in particular, explain that the possibility for the approximations $a^{\cY_N}$ to leave the ``good'' region $\cG$ on $[\alpha,\beta]$ (see~\eqref{eq:30042021a1} and recall fact~\eqref{it:30042021ac} above) does not create any problem in our situation.

To this end, we assume that \eqref{eq:30042021a2} does not hold.
Then there exists a small $\eps>0$ and a subsequence
$\{a_1^{(N)}\}$ of $\{a^{\cY_N}\}$ such that, for every~$N$,
\begin{equation}\label{eq:30042021a3}
\sup_{y\in[\alpha,\beta]}|a_1^{(N)}(y)-\wh a_h(y)|\ge\eps.
\end{equation}
By the construction, the sequence $\{a_1^{(N)}\}$ is uniformly bounded and equicontinuous on $[\alpha,\beta]$.
It is worth noting that the uniform boundedness holds even in the case $l=-r=-\infty$
(where $\partial\cG_\high=\emptyset$ and the upper bound is not seen \emph{a priori})
because of fact~\eqref{it:30042021ab} above and the fact that $\alpha$ and $\beta$ are chosen finite.
As for the equicontinuity on the whole $[\alpha,\beta]$, it follows from fact~\eqref{it:30042021ab} together with the fact that the affine functions constituting $\partial\cG_\low$ and $\partial\cG_\high$ have bounded slopes ($0$, $1$ and $-1$). By the Arzel\`a-Ascoli theorem, there exists a uniformly convergent subsequence $\{a_2^{(N)}\}$ of the sequence $\{a_1^{(N)}\}$,
$$
\sup_{y\in[\alpha,\beta]}|a_2^{(N)}(y)-\ol a(y)|\to0,\quad
N\to\infty,
$$
and the limiting continuous function $[\alpha,\beta]\ni y\mapsto\ol a(y)$
is different from $[\alpha,\beta]\ni y\mapsto\wh a_h(y)$
because of~\eqref{eq:30042021a3}.
The standard argumentation in Peano's existence theorem
(see Exercise~II.2.1 in \cite{Hartman2002} or Theorem~2.19 in \cite{Teschl2012})
yields that $\ol a$ is a solution to~\eqref{eq:07042020a3} in a sufficiently small neighborhood of $y_0$.
Hence, by the uniqueness for~\eqref{eq:07042020a3}, $\ol a=\wh a_h$ in a sufficiently small neighborhood of $y_0$. Define
\begin{align*}
\ol\alpha&=\inf\{y\in[\alpha,y_0]:\ol a(z)=\wh a_h(z)\text{ for all }z\in[y,y_0]\}\quad(\in[\alpha,y_0)),\\
\ol\beta&=\sup\{y\in[y_0,\beta]:\ol a(z)=\wh a_h(z)\text{ for all }z\in[y_0,y]\}\quad(\in(y_0,\beta]).
\end{align*}
As $\ol a$ differs from $\wh a_h$, we either have $\ol\alpha>\alpha$ or $\ol\beta<\beta$.
Assume $\ol\beta<\beta$.
But, as $(\ol\beta,\ol a(\ol\beta))=(\ol\beta,\wh a_h(\ol\beta))\in\cG$,
we can again apply the argumentation in Peano's existence theorem and conclude that $\ol a$ is a solution to~\eqref{eq:07042020a3} in a sufficiently small neighborhood of $\ol\beta$, hence $\ol a=\wh a_h$ in that neighborhood. This contradicts the definition of $\ol\beta$ and yields $\ol\beta=\beta$. Similarly, $\ol\alpha=\alpha$. But then $\ol a=\wh a_h$ on the whole $[\alpha,\beta]$, which, in turn, contradicts~\eqref{eq:30042021a3} and thus proves~\eqref{eq:30042021a2}. This concludes the proof.
\end{proof}

In the end we present an example, where we numerically solve the ODE for the EMCEL scale factor and implement our scheme.
It is instructive to do this in a situation of an irregular SDE for $Y$ such that the corresponding Euler scheme does not converge.
We first provide a sufficient condition for that.

\begin{lemma}\label{lem:14052021a1}
Let $y\in I^\circ$.
Consider the setting of Example~\ref{ex:sde}, where the diffusion coefficient $\eta\colon I^\circ\to\bbR\setminus\{0\}$ satisfies\footnote{The notation $y\ne x\to y$ is to emphasize that we consider $\liminf$ as $x$ tends to $y$ over a \emph{deleted} neighborhood of~$y$.}
\begin{equation}\label{eq:14052021a1}
\liminf_{y\ne x\to y}\big[|x-y| |\eta(x)|\big]\in(0,\infty].
\end{equation}
Let $(\zeta_k)_{k\in\bbN}$ be an iid sequence of random variables with $E[\zeta_k]=0$, $E[\zeta_k^2]=1$ and $P(\zeta_k\ne0)>0$.
For each $h\in(0,\ol h]$, we define the Euler scale factor $a_h\colon\bbR\to\bbR$ by\footnote{The extension beyond $I^\circ$ is needed only to treat the possibility of jumping out of the state space.}
$a_h(x)=\eta(x)\sqrt h$, $x\in I^\circ$, and $a_h(x)=0$, $x\in\bbR\setminus I^\circ$,
and define the (linearly interpolated, generalized) Euler scheme $X^{h,y}=(X^{h,y}_t)_{t\in[0,\infty)}$ through $a_h$ and $(\zeta_k)$
in the same way as the EMCEL scheme $\wh X^{h,y}$ is defined through $\wh a_h$ and $(\xi_k)$ in \eqref{eq:def_X}--\eqref{eq:13112017a1}.
Then, for any $T\in(0,\infty)$ and for any sequence $\{h_n\}_{n\in\bbN}\subset(0,\infty)$ with $h_n\to0$, the sequence of the laws of the processes $(X^{h_n,y}_t)_{t\in[0,T]}$ does not converge weakly in $C([0,T],\bbR)$.
\end{lemma}

Essentially, this result says that Euler-type schemes\footnote{I.e., the schemes corresponding to different distributions of $\zeta_k$.} fail to converge if \eqref{eq:14052021a1} holds.
Notice that \eqref{eq:14052021a1} is satisfied whenever the diffusion coefficient $\eta$ has in a deleted neighborhood of the point $y$ a singularity proportional to $|x-y|^{-\alpha}$, for some $\alpha\ge1$.

\begin{remark}\label{rem:14052021a1}
(i) We emphasize that not only the convergence of the distributions of $(X^{h,y}_t)$ to that of $(Y_t)$ fails;
the distributions of $(X^{h,y}_t)$ do not converge at all.

\smallskip
(ii) Neither it is possible to extract a convergent subsequence (for some $\{h_n\}\subset(0,\infty)$ with $h_n\to0$).

\smallskip
(iii) Moreover, the distributions fail to converge on arbitrarily small intervals $[0,T]$ (with $T\in(0,\infty)$).
\end{remark}

\begin{proof}[Proof of Lemma~\protect\ref{lem:14052021a1}]
For each $h\in(0,\infty)$, we have $X^{h,y}_0=y$, $X^{h,y}_h=y+\eta(y)\sqrt h\,\zeta_1$ and hence
\begin{equation}\label{eq:16052021a1}
X^{h,y}_{2h}=y+\eta(y)\sqrt h\,\zeta_1
+\eta\left(y+\eta(y)\sqrt h\,\zeta_1\right)\sqrt h\,\zeta_2.
\end{equation}
We fix some $T\in(0,\infty)$ and consider, for all $\eps\in(0,T]$, bounded continuous path functionals $\Phi^\eps\colon C([0,T],\bbR)\to\bbR$ defined by the formula
$$
\Phi^\eps(\omega)=\sup_{s\in[0,\eps]}|\omega(s)-y|\wedge1, \qquad \omega \in C([0,T],\bbR).
$$
It follows from~\eqref{eq:16052021a1} that, for $h\in(0,\frac\eps2]$,
$$
\Phi^\eps(X^{h,y})\ge\left|\eta(y)\sqrt h\,\zeta_1
+\eta\left(y+\eta(y)\sqrt h\,\zeta_1\right)\sqrt h\,\zeta_2\right|\wedge1.
$$
Now take an arbitrary sequence $\{h_n\}_{n\in\bbN}\subset(0,\infty)$ with $h_n\to0$. Denoting by $A\in(0,\infty]$ the $\liminf$ in~\eqref{eq:14052021a1} we obtain by the Fatou lemma that
\begin{align}
\liminf_{n\to\infty}E[\Phi^\eps(X^{h_n,y})]
&\ge
E\left[\liminf_{n\to\infty}\left|\eta(y)\sqrt{h_n}\,\zeta_1
+\eta\left(y+\eta(y)\sqrt{h_n}\,\zeta_1\right)\sqrt{h_n}\,\zeta_2\right|\wedge1\right]
\notag\\
&\ge
E\left[1_{\{\zeta_1\ne0\}}\left(\left|\frac{A\zeta_2}{\eta(y)\zeta_1}\right|\wedge1\right)\right]>0.
\label{eq:16052021a2}
\end{align}
Assume that the sequence of the processes $(X^{h_n,y}_t)_{t\in[0,T]}$ converges to some process $(X^y_t)_{t\in[0,T]}$ in distribution.
Then $(X^y_t)_{t\in[0,T]}$ is a continuous process starting in~$y$.
By the dominated convergence theorem it follows
$$
\lim_{\eps\to0}E[\Phi^\eps(X^y)]=0.
$$
This and~\eqref{eq:16052021a2} contradict the weak convergence of the laws of $(X^{h_n,y}_t)_{t\in[0,T]}$ to the law of $(X^y_t)_{t\in[0,T]}$.
This completes the proof.
\end{proof}

Finally, we illustrate the usefulness of the ODE~\eqref{eq:07042020a3} as well as the implications of Proposition~\ref{prop:08022020a1} and Lemma~\ref{lem:14052021a1} in a numerical experiment.

\begin{ex}\label{ex:singular_eta}
Let $b\in (0,\infty)$ and $I=\bbR$.
Define the function $\eta\colon \bbR \to (0,\infty)$ by the formula
\begin{equation}\label{eq:singular_eta}
\eta(x)=1_{\bbR\setminus\{0,b\}}(x)\frac{1}{|x||x-b|}+1_{\{0,b\}}(x),\quad x\in\bbR,
\end{equation}
and let the process $Y$ solve\footnote{Note that $\eta$ satisfies \eqref{eq:27092018a2} and~\eqref{eq:27092018a3}, i.e., we are in the setting of Example~\ref{ex:sde}.} $dY_t=\eta(Y_t)\,dW_t$, $Y_0=x_0\in \bbR$. Observe that $\eta$ satisfies~\eqref{eq:14052021a1} for 
$y\in \{0,b\}$ and thus Lemma~\ref{lem:14052021a1} implies that Euler-type schemes fail to converge in the case $x_0\in \{0,b\}$. By Proposition~\ref{prop:07022020a2} the EMCEL scheme converges weakly to $Y$ for all initial values $x_0\in \bbR$ and Corollary~\ref{cor:07042020a1} ensures that for every $h\in (0,\ol h]$ the EMCEL scale factor $\wh a_h \in C^1(\bbR, \bbR)$ is the unique solution of the initial value problem~\eqref{eq:07042020a3}.

Figure~\ref{fig_singular_sde_sf} depicts numerical solutions $\wh a_h$ (top) of this ODE as well as normalized solutions $\wh a_h/\sqrt{h}$ (middle) and solutions relative to the Euler scale factors $\wh a_h/(\sqrt{h}\eta)$ (bottom) in the case $b=2$ for $h\in 2\cdot \{10^{-1},10^{-2},10^{-3},10^{-4}\}$. In particular, the plot at the bottom indicates that $\lim_{h\to 0}\wh a_h(y)/(\sqrt{h}\eta(y))=1$ for all $y\in \bbR\setminus\{0,b\}$ as established in Proposition~\ref{prop:08022020a1}. For $y\in \{0,b\}$ the plot in the middle suggests that $\wh a_h(y)$ goes slower to $0$ than $\sqrt{h}$ which is confirmed by Proposition~\ref{prop:08022020a1} ensuring that $\lim_{h\to 0}\wh a_h(y)/(h^{1/4})=(6/b^2)^{1/4}$ in this case.

\begin{figure}[!htb]
\centering
\includegraphics[width=0.5\textwidth]{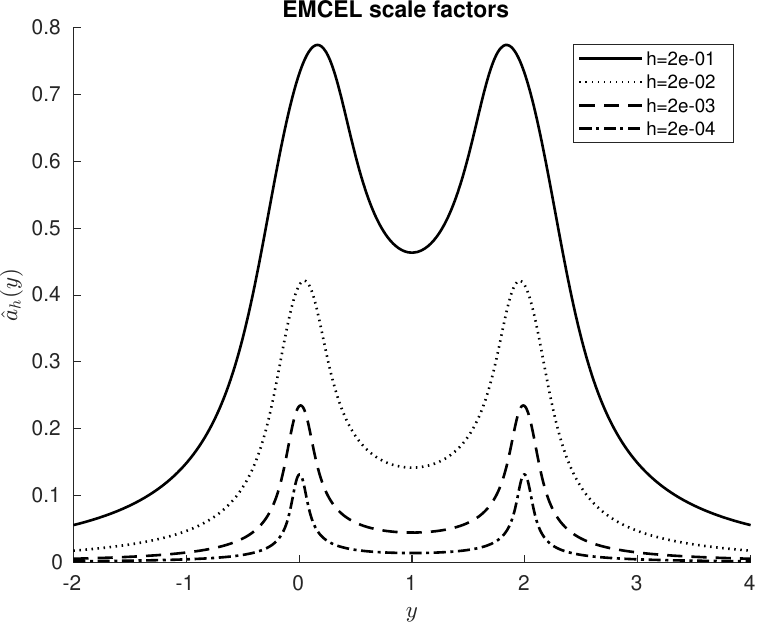}\\
\includegraphics[width=0.5\textwidth]{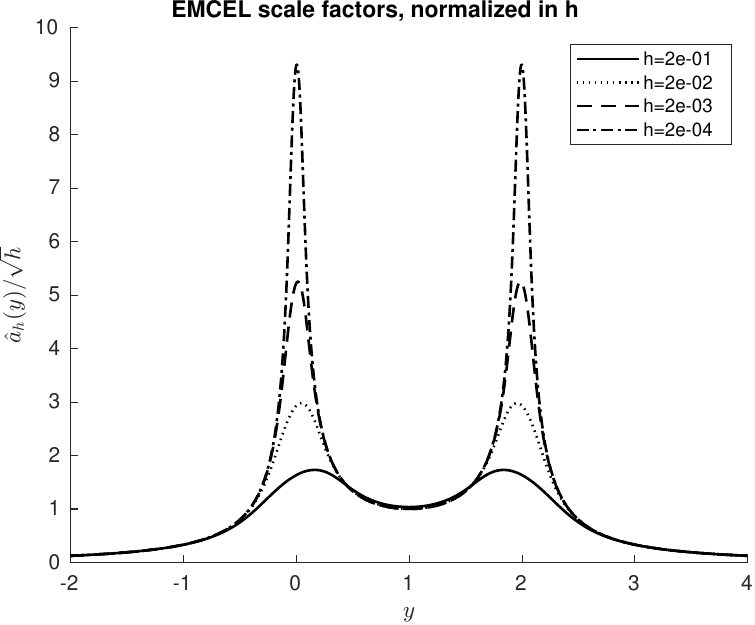}\\
\includegraphics[width=0.5\textwidth]{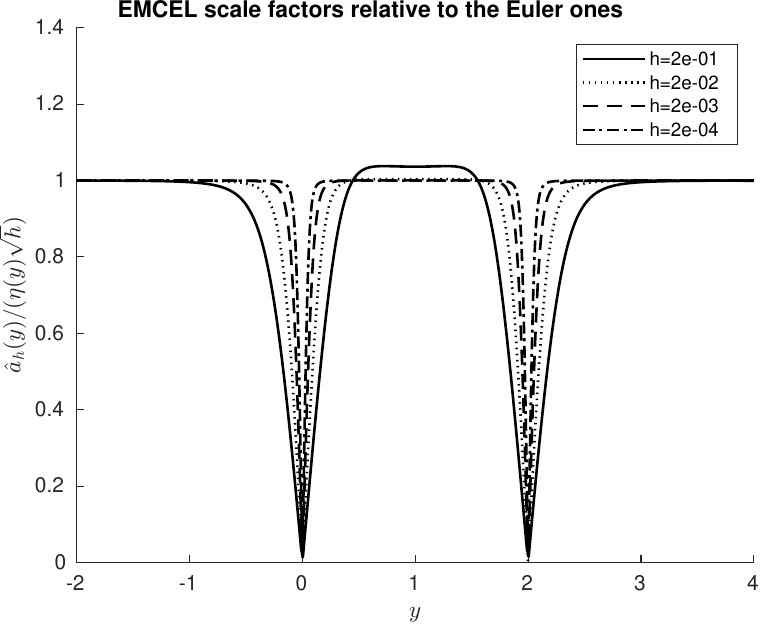}

{\caption{\footnotesize  
The figure shows solutions $\wh a_h$ (top), normalized solutions $\wh a_h/\sqrt{h}$ (middle) and solutions relative to the Euler scale factors $\wh a_h/(\sqrt{h}\eta)$ (bottom) of the initial value problem~\eqref{eq:07042020a3} associated to~\eqref{eq:singular_eta} for the case $b=2$ and for $h\in 2\cdot \{10^{-1},10^{-2},10^{-3},10^{-4}\}$.
}\label{fig_singular_sde_sf}}
\end{figure}

Figure~\ref{fig_singular_sde_perf} illustrates the performance of the weak Euler scheme\footnote{See the text preceding Corollary~\ref{cor:06022020a1} for the definition of the weak Euler scheme.} and of the EMCEL scheme in this setting. The upper panel shows two realizations (dotted: weak Euler, solid: EMCEL) with the same random increments $(\xi_k)$ in the case $x_0=1$, $h=2\cdot 10^{-3}$ with time horizon $T=2$. As long as the trajectories move around the level~$1$ one can barely spot a difference between the trajectories. But as they approach the singularity of $\eta$ at~$0$ around time $0.6$ the weak Euler scheme is shot to a level bigger than~$5$, whereas the EMCEL scheme continues smoothly only exhibiting a higher volatility. The reason is that, in contrast to the EMCEL scale factor, the Euler scale factor inherits the singularities of $\eta$. 
This observation is also manifested in the lower panel of Figure~\ref{fig_singular_sde_perf} which depicts the empirical distribution functions of $X^{Eu,h}_2$ (dotted) and $\wh X^{h}_2$ (solid) in this case for a sample size $M=10^5$. One sees that, opposed to the EMCEL scheme, the weak Euler scheme puts a lot of mass outside the interval $[-2,4]$ indicating that the weak Euler scheme exhibits the behavior portrayed in the upper panel of Figure~\ref{fig_singular_sde_perf} with high probability.

\begin{figure}[!htb]
\centering
\includegraphics[width=0.5\textwidth]{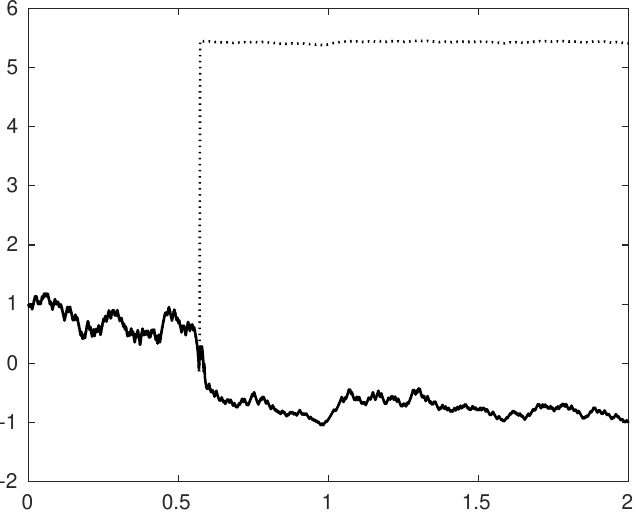}\\
\includegraphics[width=0.5\textwidth]{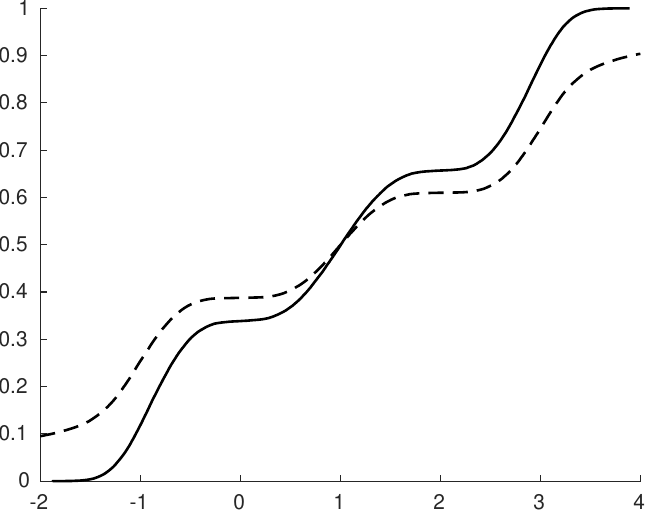}

{\caption{\footnotesize  
The upper panel shows trajectories of the weak Euler scheme $X^{Eu,h}$ (dotted) and the EMCEL scheme $\wh X^h$ (solid) in the setting of Example~\ref{ex:singular_eta} in the case $b=2$, $x_0=1$, $h=2\cdot 10^{-3}$ with time horizon $T=2$. The lower panel shows the empirical distribution functions of $X^{Eu,h}_2$ (dotted) and $\wh X^h_2$ (solid) in this case for a sample size $M=10^5$.
}\label{fig_singular_sde_perf}}
\end{figure}
\end{ex}

\paragraph{Acknowledgement}
We thank three anonymous referees for their constructive comments and suggestions that helped us improve the manuscript.
Wolfgang L\"ohr and Mikhail Urusov acknowledge the support from the
\emph{German Research Foundation} through the project 415705084.

\bibliographystyle{abbrv}
\bibliography{literature}

\end{document}